\newtheorem{thm}{Theorem}
\newtheorem{prop}[thm]{Proposition}
\newtheorem{lem}[thm]{Lemma}
\begin{document}
\title[Weinstein's Morphism]
{Generalization of Weinstein's Morphism}

\author{Andr\'es Pedroza}  
\address{Facultad de Ciencias\\
           Universidad de Colima\\
           Bernal D\'{\i}az del Castillo No. 340\\
           Colima, Col., Mexico 28045}
\email{andres\_pedroza@ucol.mx}

\begin{abstract}  
We introduce a generalization of Weinstein's morphism, defined on  
\(\pi_{2k-1}(\mathrm{Ham}(M, \omega))\) for \(1 < k \leq n\), where \((M, \omega)\) is a \(2n\)-dimensional 
symplectic manifold.  
Using this morphism, we show that for \(n > 1\) and \(1 < k \leq n\), the homotopy groups  
\[
\pi_{2k-1}\!\big(\mathrm{Ham}(\mathbb{C}P^n, \omega_\textup{FS})\big)
\quad \text{and} \quad
\pi_{2k-1}\!\big(\mathrm{Ham}(\widetilde{\mathbb{C}P}{}^n, \widetilde{\omega}_\rho)\big)
\]
are nontrivial.  
Here, \((\widetilde{\mathbb{C}P}{}^n, \widetilde{\omega}_\rho)\) denotes the symplectic one-point 
blow-up of \((\mathbb{C}P^n, \omega_\textup{FS})\) of weight~\(\rho\).   Further applications of the morphism
include the group $\pi_{2k-1}\!\big(\mathrm{Ham} (\mathbb{C}P^n\times M, \omega_\textup{FS}\oplus\omega)\big)$.
\end{abstract}

\keywords{Weinstein's morphism, Hamiltonian diffeomorphism}


\subjclass{Primary: 57S05, 53D35 Secondary:  57R17, 53D05}

\maketitle

\section{Introduction}


Let $(M,\omega)$ be a closed symplectic manifold of dimension $2n$.
For $1\leq k \leq n $ define the group
 $\mathcal{P}_{2k}(M,\omega)$   as the image of the pairing 
$
\langle\omega^k,\cdot \rangle\colon H_{2k}(M;\mathbb{Z})
\to \mathbb{R}.
$ 
Notice that for  $k=1$,   $\mathcal{P}_{2}(M,\omega)$ is the well-known period group
of  the symplectic manifold $(M,\omega)$. 
In \cite{weinstein-coho}, A. Weinstein defined a morphism
\begin{eqnarray*}
\mathcal{A}\colon
 \pi_{1} ( \textup{Ham} (M,\omega)) \to \mathbb{R}/\mathcal{P}_{2}
(M,\omega)
\end{eqnarray*}
based on the action functional of a Hamiltonian function $H:M\times [0,1]\to\mathbb{R}$
associated with the loop of Hamiltonian diffeomorphisms. 
Inspired by Weinstein's morphism, we define in this article a group morphism  
\[
\mathcal{A} \colon \pi_{2k-1}(\textup{Ham}(M, \omega)) \to \mathbb{R} / \mathcal{P}_{2k}(M, \omega)
\]  
for each \( k \in \{2, \ldots, n\} \).
The value of \( \mathcal{A} \) on an element \([\psi_{\underline{t}}] \in \pi_{2k-1}
(\textup{Ham}(M, \omega))\) can be broadly interpreted as the average, 
over the symplectic manifold \((M, \omega)\), of the \(\omega^k\)-volume of the 
\(2k\)-balls bounded by the topological \((2k-1)\)-spheres  
$
\{ \psi_{\underline{t}}(p) : \underline{t} \in S^{2k-1} \}
$  
as the base point \(p \in M\) varies.

We proceed to compute the value of \( \mathcal{A} \) on a specific element of the group  
\(\pi_{2k-1}(\textup{Ham}(\mathbb{C}P^n, \omega_{\textup{FS}}))\), for \(1 \leq k \leq n\).  
This element, which we denoted by \([\{ \psi_{A_{\underline{t}}} \}]_{{\underline{t}} \in S^{2k-1}}\),  
is induced by a family of unitary matrices \( A_{\underline{t}} \in U(n+1) \) representing a 
generator of the homotopy group \(\pi_{2k-1}(U(n+1))\).
The computation relies on the existence of a symplectic embedding  
$
(B^{2n}(1), \omega_0) \hookrightarrow (\mathbb{C}P^n, \omega_{\textup{FS}})
$  
of the unit open \(2n\)-ball whose image is dense in \(\mathbb{C}P^n\).  
A detailed description of this embedding is provided in the appendix of \cite{mcduffpol-packing}.  
Using this embedding, we carry out the explicit computation of  
$
\mathcal{A}\left( [\{ \psi_{A_{\underline{t}}} \}]_{{\underline{t}} \in S^{2k-1}} \right)
$
in Proposition \ref{p:2n}. This yields an alternative and constructive proof of the 
nontriviality of the homotopy groups  \(\pi_{2k-1}(\textup{Ham}(\mathbb{C}P^n, 
\omega_{\textup{FS}}))\), for all \(1 \leq k \leq n\).

\begin{thm}
\label{t:main}
Let \( (\mathbb{C}P^n, \omega_{\textup{FS}}) \) denote complex projective space equipped 
with the Fubini–Study symplectic form. Then, for every integer \( k \in \{1, \ldots, n\} \), the 
homotopy group  
\[
\pi_{2k-1}(\textup{Ham}(\mathbb{C}P^n, \omega_{\textup{FS}}))
\]  
is nontrivial.
\end{thm}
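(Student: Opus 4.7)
The plan is to exhibit an explicit non-trivial class in $\pi_{2k-1}(\textup{Ham}(\mathbb{C}P^n,\omega_{\textup{FS}}))$ by showing that the generalized Weinstein morphism $\mathcal{A}$ sends it to a non-zero element of $\mathbb{R}/\mathcal{P}_{2k}(\mathbb{C}P^n,\omega_{\textup{FS}})$. I would proceed in three steps.

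First, I construct the candidate class. By Bott periodicity, $\pi_{2k-1}(U(n+1)) \cong \mathbb{Z}$ for $1 \leq k \leq n$, so there is a smooth family $A_{\underline{t}} \in U(n+1)$, $\underline{t} \in S^{2k-1}$, representing a generator. The standard $U(n+1)$-action on $\mathbb{C}P^n$ factors through $PU(n+1)$, and $PU(n+1)$ sits inside $\textup{Ham}(\mathbb{C}P^n,\omega_{\textup{FS}})$ as a group of Kähler isometries; hence the induced family $\psi_{A_{\underline{t}}}$ yields a class $[\{\psi_{A_{\underline{t}}}\}] \in \pi_{2k-1}(\textup{Ham}(\mathbb{C}P^n,\omega_{\textup{FS}}))$, which is the natural target for $\mathcal{A}$.

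Second, I evaluate $\mathcal{A}$ on this class. By the geometric interpretation of the morphism, this amounts to averaging, over the base point $p \in \mathbb{C}P^n$, the $\omega^k_{\textup{FS}}$-volume of a $2k$-chain spanning the orbit sphere $\{\psi_{A_{\underline{t}}}(p) : \underline{t} \in S^{2k-1}\}$. To make this tractable, I would pull the computation back through the Darboux-type symplectic embedding $(B^{2n}(1),\omega_0) \hookrightarrow (\mathbb{C}P^n,\omega_{\textup{FS}})$ from the appendix of \cite{mcduffpol-packing}; its image is dense with complement of vanishing $\omega^n_{\textup{FS}}$-measure, so the $p$-average can be replaced by an integral in the linear coordinates of $B^{2n}(1)$, where the $U(n+1)$-action is given by an explicit formula. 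This is the content of Proposition \ref{p:2n}, which yields a closed-form real value for $\mathcal{A}([\{\psi_{A_{\underline{t}}}\}])$.

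Third, I compare this value with the period lattice. Since $H_{2k}(\mathbb{C}P^n;\mathbb{Z}) = \mathbb{Z}\cdot[\mathbb{C}P^k]$, the group $\mathcal{P}_{2k}(\mathbb{C}P^n,\omega_{\textup{FS}})$ is the cyclic subgroup of $\mathbb{R}$ generated by $\langle \omega^k_{\textup{FS}},[\mathbb{C}P^k]\rangle$, a concrete rational multiple of $\pi^k$. The theorem follows once we verify that the value produced by Proposition \ref{p:2n} is not an integer multiple of this generator, so that $\mathcal{A}([\{\psi_{A_{\underline{t}}}\}])\neq 0$ in $\mathbb{R}/\mathcal{P}_{2k}(\mathbb{C}P^n,\omega_{\textup{FS}})$ and hence the class is non-trivial. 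The main obstacle, and the technical heart of the argument, is the explicit evaluation in the second step: one must set up the action of $A_{\underline{t}}$ on $B^{2n}(1)$ concretely enough that the averaged spanning $\omega^k$-volume can be integrated in closed form, and then check that the resulting numerical value lies outside the period lattice for every $k$ in the stated range.
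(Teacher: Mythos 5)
Your proposal takes essentially the same route as the paper: the same class $[\{\psi_{A_{\underline{t}}}\}]$ induced by a generator of $\pi_{2k-1}(U(n+1))$, the same reduction of the averaged spanning $\omega^k$-volume to an explicit integral over the dense symplectically embedded ball $B^{2n}(1)$ (Proposition~\ref{p:2n}), and the same final comparison with the period group $\mathcal{P}_{2k}(\mathbb{C}P^n,\omega_{\textup{FS}})=\langle\pi^k/k!\rangle$. The one step you defer---verifying that the computed value is not an integer multiple of $\pi^k/k!$---is exactly what the paper settles via Lemma~\ref{l:va}, which collapses the multi-index sum to $2^k k!\binom{2k-1}{k}$ and exhibits the coefficient of $\pi^k/k!$ as a rational number strictly between $0$ and $1$.
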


This result was originally established by A. G. Reznikov in more general terms in 
\cite[Theorem 1.4]{reznikov}, through 
the construction of odd-degree characteristic classes in the cohomology ring of \(\textup{Ham}(M, \omega)\). 
For the cases \(k = 1\) and \(k = 2\), alternative proofs using different techniques are also available.  
When \(k = 1\), a specific Hamiltonian circle action yields a nontrivial element in 
\(\pi_{1}(\textup{Ham}(\mathbb{C}P^n, \omega_{\textup{FS}}))\). This can be shown 
using either Weinstein’s morphism \cite{weinstein-coho} or Seidel’s morphism 
\cite{seidel-pi1of}.  
For \(k = 2\), it is known that \(\pi_3(\textup{Ham}(\mathbb{C}P^n, \omega_{\textup{FS}}))\) 
contains a subgroup isomorphic to \(\mathbb{Z}\) for \(n \geq 2\). In \cite[Theorem 1.1]{KedraMcDuff}, 
J. K\c{e}dra and D. McDuff showed that if a symplectic manifold \((M, \omega)\) admits a
 nontrivial Hamiltonian circle action that is contractible in \(\textup{Ham}(M, \omega)\), then
  \(\pi_3(\textup{Ham}(M, \omega))\) contains an element of infinite order. This result applies, 
  in particular, to \((\mathbb{C}P^n, \omega_{\textup{FS}})\).  
Moreover, M. Gromov \cite{gromov-psudo} proved that \(\textup{Ham}(\mathbb{C}P^2, 
\omega_{\textup{FS}})\) has the homotopy type of \(PU(3)\), from which it follows that  
$
\pi_3(\textup{Ham}(\mathbb{C}P^2, \omega_{\textup{FS}})) \simeq \mathbb{Z}.
$

In addition, the nontrivial element  
\([\{ \psi_{A_{\underline{t}}} \}]_{{\underline{t}} \in S^{2k-1}}\)  
in  
\(\pi_{2k-1}(\textup{Ham}(\mathbb{C}P^n, \omega_{\textup{FS}}))\), for \(1 \leq k \leq n\),  
admits a lift to the symplectic one-point  blow-up  
\((\widetilde{\mathbb{C}P}\,^n, \widetilde{\omega}_\rho)\) of weight \(\rho\).  
By computing the value of  \(\mathcal{A}\) on the lifted element  
\([\{ \widetilde{\psi}_{A_{\underline{t}}} \}]_{{\underline{t}} \in S^{2k-1}}\),
in  Proposition \ref{p:ainblow},   
we show that it represents an element of infinite order in
\(\pi_{2k-1}(\textup{Ham}(\widetilde{\mathbb{C}P}\,^n, \widetilde{\omega}_\rho))\).
In contrast with Theorem~\ref{t:main}, the next statement appears to be new.

\begin{thm}  
\label{t:realmain}  
Let \( n > 1 \), and let \( \rho \in (0,1) \) be a transcendental number. Then, for every \( k \in \{1, \ldots, n\} \), the homotopy group  
\[
\pi_{2k-1}\big(\textup{Ham}(\widetilde{\mathbb{C}P}\,^n, \widetilde{\omega}_\rho)\big)
\]  
contains an element of infinite order.  
\end{thm}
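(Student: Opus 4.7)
The proof proceeds by constructing an explicit candidate class in the homotopy group and then applying the morphism $\mathcal{A}$ to detect its infinite order. For every $1 \leq k \leq n$ the inclusion $U(n) \hookrightarrow U(n+1)$ as block matrices $\mathrm{diag}(1, B)$ induces an isomorphism on $\pi_{2k-1}$: in the fibration $U(n) \to U(n+1) \to S^{2n+1}$ one has $\pi_{2k-1}(S^{2n+1}) = \pi_{2k-2}(S^{2n+1}) = 0$ whenever $k \leq n$. Hence a generator of $\pi_{2k-1}(U(n+1))$ admits a representative $A_{\underline{t}}$, $\underline{t} \in S^{2k-1}$, consisting of unitary matrices that all fix the point $p_0 := [1:0:\cdots:0]$. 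The corresponding $U(n)$-action on $\mathbb{C}P^n$ is Hamiltonian and fixes $p_0$, so it extends canonically to a Hamiltonian $U(n)$-action on the symplectic one-point blow-up $(\widetilde{\mathbb{C}P}{}^n, \widetilde{\omega}_\rho)$ at $p_0$. This produces the lifted family $\widetilde{\psi}_{A_{\underline{t}}}$ in $\textup{Ham}(\widetilde{\mathbb{C}P}{}^n, \widetilde{\omega}_\rho)$ and the class $[\{\widetilde{\psi}_{A_{\underline{t}}}\}]$ in $\pi_{2k-1}(\textup{Ham}(\widetilde{\mathbb{C}P}{}^n, \widetilde{\omega}_\rho))$ already mentioned before the statement of the theorem.

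Next, I would compute
\[
v := \mathcal{A}\bigl([\{\widetilde{\psi}_{A_{\underline{t}}}\}]\bigr) \in \mathbb{R} / \mathcal{P}_{2k}(\widetilde{\mathbb{C}P}{}^n, \widetilde{\omega}_\rho),
\]
exactly as in Proposition~\ref{p:ainblow}. The plan is to transport the ball embedding of \cite{mcduffpol-packing} used in Proposition~\ref{p:2n} so that its image lies in $\widetilde{\mathbb{C}P}{}^n \setminus E$, where $E$ denotes the exceptional divisor; the integral defining $\mathcal{A}$ then splits into the contribution already computed on $\mathbb{C}P^n$ and a correction term supported in a neighbourhood of $E$, which is explicitly controlled by the weight $\rho$.

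The decisive step is then the algebraic one: showing that $v$ has infinite order in $\mathbb{R} / \mathcal{P}_{2k}$. The period group $\mathcal{P}_{2k}(\widetilde{\mathbb{C}P}{}^n, \widetilde{\omega}_\rho)$ is finitely generated by the pairings $\langle \widetilde{\omega}_\rho^k, [S]\rangle$ over a $\mathbb{Z}$-basis of $H_{2k}(\widetilde{\mathbb{C}P}{}^n;\mathbb{Z})$; each such pairing is a polynomial in $\rho$ of degree at most $k$ with rational coefficients, the Fubini--Study classes contributing rational constants and the exceptional classes contributing terms of the form $\pm\rho^{j}$. The value $v$ has the same form. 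Therefore, a relation $mv \in \mathcal{P}_{2k}$ for some integer $m \neq 0$ forces a nontrivial polynomial identity with rational coefficients in $\rho$, contradicting the transcendence of $\rho$.

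The main obstacle lies in this last polynomial-identification step: one must pin down the exact $\rho$-dependence of $v$, requiring careful control of how the blow-up construction modifies the $\omega^k$-integrals near $E$. The hypothesis $n > 1$ is what guarantees that the blow-up procedure introduces genuine $\rho$-dependent terms into the period lattice at each degree $2k$, $1 \leq k \leq n$, and the transcendence of $\rho$ is precisely what converts the nonvanishing of a polynomial expression in $\rho$ into honest infinite order. The construction and the evaluation of $\mathcal{A}$ are essentially routine extensions of the arguments used for Theorem~\ref{t:main}; it is the arithmetic comparison of $v$ against $\mathcal{P}_{2k}$ where the real work lies.
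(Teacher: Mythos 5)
Your outline follows the paper's own proof essentially step for step: the same representative class (a generator of $\pi_{2k-1}(U(n+1))$ realized by block-unitary matrices fixing the centre of the ball embedding, hence inducing diffeomorphisms of the blow-up), the same evaluation of $\mathcal{A}$ by splitting the average into the integral over all of $\mathbb{C}P^n$ minus the integral over the embedded $\rho$-ball (Propositions~\ref{p:relationW} and~\ref{p:ainblow}), and the same transcendence-of-$\rho$ endgame. So there is no methodological divergence to report; the issue is whether the endgame actually closes.

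The genuine gap is the step you defer and then settle by assertion, namely that a relation $mv\in\mathcal{P}_{2k}(\widetilde{\mathbb{C}P}\,^n,\widetilde\omega_\rho)$ with $m\neq 0$ ``forces a nontrivial polynomial identity.'' Nontriviality is precisely what must be verified, and it can fail. First, $v$ is not a polynomial in $\rho$ (contrary to your claim that it ``has the same form'' as the periods): by Proposition~\ref{p:ainblow} and Lemma~\ref{l:va},
\[
v=\frac{\pi^k}{k!}\,c_k\,\frac{1-\rho^{2(n+k)}}{1-\rho^{2n}},
\qquad
c_k=\frac{n!\,k!}{(n+k)!}\binom{2k-1}{k},
\]
the denominator coming from $\textup{Vol}(\widetilde{\mathbb{C}P}\,^n,\widetilde\omega_\rho^n/n!)=\pi^n(1-\rho^{2n})/n!$; one must clear $1-\rho^{2n}$ before matching coefficients, and whether the resulting identity is nontrivial depends on the arithmetic of $c_k$. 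For $k<n$ the exponents $0,2k,2n,2(n+k)$ are pairwise distinct, so matching gives $A=B=0$ and then $mc_k=0$, and the argument closes. But for $k=n$ one computes $c_n=\tfrac{1}{2}$, hence $v=\tfrac{1}{2}(1+\rho^{2n})\,\pi^n/n!$ and
\[
2v=\frac{\pi^n}{n!}+\frac{\pi^n\rho^{2n}}{n!},
\]
which \emph{does} lie in the lattice $\langle\pi^n/n!,\;\pi^n\rho^{2n}/n!\rangle$ --- the lattice your description of the period group (Fubini--Study classes giving rational constants, exceptional classes giving $\rho$-power terms) produces, and the one the paper states as well. Transcendence of $\rho$ is powerless here: at $k=n$ your argument certifies only that the class has order at least two, not infinite order. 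The repair is to apply your own prescription --- pairing $\widetilde\omega_\rho^{\,k}$ against an actual $\mathbb{Z}$-basis of $H_{2k}(\widetilde{\mathbb{C}P}\,^n;\mathbb{Z})$ --- honestly at $k=n$: the exceptional divisor has real dimension $2n-2$, so $H_{2n}$ is generated by the fundamental class alone and $\mathcal{P}_{2n}=\mathbb{Z}\,(1-\rho^{2n})\pi^n/n!$. Then $m\,\tfrac{1}{2}(1+\rho^{2n})=A(1-\rho^{2n})$ forces $\tfrac{m}{2}=A$ and $\tfrac{m}{2}=-A$, hence $m=0$, and infinite order follows. (The same correction is needed in the paper's proof, whose formula $\mathcal{P}_{2k}(\widetilde M,\widetilde\omega_\rho)=\mathcal{P}_{2k}(M,\omega)+\mathbb{Z}\langle\pi^k\rho^{2k}/k!\rangle$ likewise overcounts the period group when $2k=\dim\widetilde M$.)
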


The approach to proving this result is similar to that used in \cite{pea-rankham} , where Weinstein's morphism
 is employed to show that \( \pi_{1}\big(\textup{Ham}(\widetilde{M}, \widetilde{\omega}_\rho)\big) \) 
 has positive rank. The key idea is to analyze the relation expressing the action functional 
 \( \mathcal{A}[\widetilde{\psi}_{\underline{t}}] \) in terms of \( \mathcal{A}[\psi_{\underline{t}}] \). 
 It turns out that the vanishing of \( \mathcal{A}[\widetilde{\psi}_{\underline{t}}] \) is equivalent to
the vanishing of a polynomial in \( \rho \) with rational coefficients.

\newpage

Although the statement concerning 
$
\pi_{2k-1}\!\big(\mathrm{Ham}(\mathbb{C}P^n, \omega_{\mathrm{FS}})\big)
$
in Theorem~\ref{t:main} is not new, the use of Weinstein’s morphism in the computation of  
$
\mathcal{A}\!\left([\{\psi_{A_{\underline{t}}}\}]\right)
$
allows us to arrive at an analogous statement for other symplectic manifolds.
Namely, in Proposition~\ref{p:product} we analyzed how to compute Weinstein’s 
morphism in the case of Cartesian products of symplectic manifolds, that is, on 
\(\pi_{2k-1}(\mathrm{Ham}(M\times N,\omega\oplus\eta))\) in terms of the 
Weinstein morphisms on 
\(\pi_{2k-1}(\mathrm{Ham}(M,\omega))\) and 
\(\pi_{2k-1}(\mathrm{Ham}(N,\eta))\).  
This formulation leads to the following result.

\begin{thm}
\label{t:prod}
Let $(M,\omega)$ be a closed symplectic manifold of dimension $2m$, and let 
$n,k\in \mathbb{N}$ with $1\leq k\leq \min\{m,n\}$. 
Assume that $\pi_{2k-1}(M)=0$ and that
 $\mathcal{P}_{2j}(M,\omega)\subset \mathbb{Q}$ for all $1\leq j\leq k$.
Then
\[
\pi_{2k-1}\!\left(\mathrm{Ham}(\mathbb{C}P^n\times M, \, 
\omega_{\mathrm{FS}}\oplus \omega)\right)
\]
is nontrivial.
\end{thm}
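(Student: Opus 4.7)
The plan is to construct a nontrivial element of $\pi_{2k-1}(\textup{Ham}(\mathbb{C}P^n\times M, \omega_{\textup{FS}}\oplus \omega))$ by pushing forward the explicit element produced in Theorem~\ref{t:main}. Consider the group homomorphism
\[
\iota\colon \textup{Ham}(\mathbb{C}P^n,\omega_{\textup{FS}})\longrightarrow \textup{Ham}\bigl(\mathbb{C}P^n\times M,\,\omega_{\textup{FS}}\oplus\omega\bigr),\qquad \psi\longmapsto \psi\times \textup{id}_M.
\]
The induced map on $\pi_{2k-1}$ carries $[\{\psi_{A_{\underline{t}}}\}]$ to $[\{\psi_{A_{\underline{t}}}\times \textup{id}_M\}]$, which is the candidate class I would show to be nontrivial by evaluating the generalized Weinstein morphism $\mathcal{A}$ of the product.

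The key computation uses Proposition~\ref{p:product}, which describes $\mathcal{A}$ on a Cartesian product in terms of the factor Weinstein morphisms. Since the $M$-component of $\iota_*[\{\psi_{A_{\underline{t}}}\}]$ is the constant identity isotopy, the product formula should reduce to
\[
\mathcal{A}\bigl([\{\psi_{A_{\underline{t}}}\times \textup{id}_M\}]\bigr) \;\equiv\; c_M\cdot \mathcal{A}\bigl([\{\psi_{A_{\underline{t}}}\}]\bigr) \pmod{\mathcal{P}_{2k}(\mathbb{C}P^n\times M,\omega_{\textup{FS}}\oplus\omega)},
\]
for a nonzero scalar $c_M$ built from the integrals $\int_M\omega^j$, $1\leq j\leq k$. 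The assumption $\mathcal{P}_{2j}(M,\omega)\subset\mathbb{Q}$ ensures $c_M\in\mathbb{Q}$, while the assumption $\pi_{2k-1}(M)=0$ makes the sphere $\{(\psi_{A_{\underline{t}}}(p),q)\}_{\underline{t}\in S^{2k-1}}$ bound a disk inside the single slice $\mathbb{C}P^n\times\{q\}$, so that no unintended $M$-dependence leaks into the definition of $\mathcal{A}$ on the product.

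To deduce nontriviality, I would use the K\"unneth formula to check that every integral $2k$-cycle in $\mathbb{C}P^n\times M$ decomposes as a sum of products of integral cycles in the factors, against which the binomial expansion
\[
(\omega_{\textup{FS}}\oplus\omega)^k \;=\; \sum_{j=0}^{k}\binom{k}{j}\,\omega_{\textup{FS}}^{j}\wedge\omega^{k-j}
\]
pairs to give a sum of products of elements of $\mathcal{P}_{2j}(\mathbb{C}P^n,\omega_{\textup{FS}})\cdot \mathcal{P}_{2(k-j)}(M,\omega)\subset\mathbb{Q}$. Hence $\mathcal{P}_{2k}(\mathbb{C}P^n\times M,\omega_{\textup{FS}}\oplus\omega)\subset\mathbb{Q}$, so it suffices to check that the real lift of $c_M\cdot \mathcal{A}([\{\psi_{A_{\underline{t}}}\}])$ is irrational. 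Since Proposition~\ref{p:2n} computes $\mathcal{A}([\{\psi_{A_{\underline{t}}}\}])$ as an irrational number (under a standard normalization of $\omega_{\textup{FS}}$), and $c_M\in\mathbb{Q}^\times$, the resulting value lies outside $\mathbb{Q}$, and hence outside the period group of the product.

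The main obstacle is pinning down the precise form of the product formula in Proposition~\ref{p:product} and verifying that, for $\iota_*[\{\psi_{A_{\underline{t}}}\}]$, it actually reduces to a nonzero rational multiple of $\mathcal{A}([\{\psi_{A_{\underline{t}}}\}])$. Controlling the cross terms of the binomial expansion against a disk that spans $\{\psi_{A_{\underline{t}}}(p)\}\times\{q\}$ is exactly the delicate point, and it is precisely where the hypothesis $\pi_{2k-1}(M)=0$ intervenes by allowing the spanning disk to be taken purely in the $\mathbb{C}P^n$-direction, forcing all mixed terms $\omega_{\textup{FS}}^{j}\wedge\omega^{k-j}$ with $j<k$ to vanish on the disk and thereby isolating the rational factor $c_M$ coming from the $M$-average.
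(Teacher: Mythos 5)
Your overall strategy coincides with the paper's: push $[\{\psi_{A_{\underline t}}\}]$ forward under $\psi\mapsto\psi\times\mathrm{id}_M$, evaluate the product Weinstein morphism via Proposition~\ref{p:product}, and compare the result against the period group $\mathcal{P}_{2k}(\mathbb{C}P^n\times M,\omega_{\mathrm{FS}}\oplus\omega)$ computed through K\"unneth. (A small inaccuracy along the way: Proposition~\ref{p:product} gives $\mathcal{A}^{\mathbb{C}P^n\times M}[\psi_{\underline t}\times 1_M]=\big[\mathcal{A}^{\mathbb{C}P^n}[\psi_{\underline t}]\big]$ on the nose; there is no scalar $c_M$ built from $\int_M\omega^j$, because $\mathcal{A}$ is normalized by the symplectic volume. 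This by itself is harmless, since the true constant $1$ is a nonzero rational.)

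The genuine gap is in the final nontriviality deduction, where your two key claims are mutually incompatible. Under the paper's normalization (a line has area $\pi$), Proposition~\ref{p:2n} gives $\mathcal{A}([\psi_{A_{\underline t}}])=\frac{\pi^k}{k!}\,q$ with $q\in(0,1)\cap\mathbb{Q}$, which is indeed irrational; but then $\mathcal{P}_{2j}(\mathbb{C}P^n,\omega_{\mathrm{FS}})=\langle\pi^j/j!\rangle\not\subset\mathbb{Q}$, so your K\"unneth computation does \emph{not} give $\mathcal{P}_{2k}(\mathbb{C}P^n\times M,\omega_{\mathrm{FS}}\oplus\omega)\subset\mathbb{Q}$: this period group contains the irrational number $\pi^k/k!$. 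If instead you rescale $\omega_{\mathrm{FS}}$ so that its periods are rational (line area $1$), then the value of $\mathcal{A}$ rescales by the same factor $\pi^{-k}$ and becomes the rational number $q\cdot\frac{n!\,k!}{(n+k)!}\binom{2k-1}{k}$-type quantity, so the dichotomy ``irrational value versus rational periods'' fails again; worse, with rational periods on both factors one cannot conclude at all, since $\mathcal{P}_{2j}(M,\omega)$ may have denominators making that rational value an actual period. The arithmetic input you are missing is exactly what the paper uses: keep the $\pi$-normalization, observe that $\mathcal{P}_{2k}(\mathbb{C}P^n\times M,\omega_{\mathrm{FS}}\oplus\omega)$ is generated by $\pi^k/k!$ together with elements of $\frac{\pi^{k-j}}{(k-j)!}\,\mathcal{P}_{2j}(M,\omega)\subset\pi^{k-j}\mathbb{Q}$ for $1\le j\le k$, and invoke the transcendence of $\pi$ (linear independence of $1,\pi,\dots,\pi^k$ over $\mathbb{Q}$) to see that $\frac{\pi^k}{k!}\,q$ can lie in this group only if $q\in\mathbb{Z}$, contradicting $0<q<1$. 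The hypothesis $\mathcal{P}_{2j}(M,\omega)\subset\mathbb{Q}$ enters precisely to confine the mixed K\"unneth terms to the $\mathbb{Q}$-span of $1,\pi,\dots,\pi^{k-1}$, not to make the whole period group rational.
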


The previous result admits an analogous formulation obtained by replacing 
\((\mathbb{C}P^n, \omega_{\mathrm{FS}})\) with 
\((\widetilde{\mathbb{C}P}{}^n, \widetilde{\omega}_\rho)\). 
The proof follows the same line of argument as before; 
for this reason, and to avoid unnecessary repetition, 
we omit its statement.

\section{Definition of the morphism $\mathcal{A}$}

Fix an integer \( 1 \leq k \leq n \), and suppose the symplectic manifold \( (M, \omega) \) 
satisfies \( \pi_{2k-1}(M) = 0 \). Let \( D^{2k} \) denote the closed \( 2k \)-dimensional ball. 
Then, for any smooth map \( \gamma \in \Omega_{2k-1}(M) \), there exists a smooth extension  
$
u_\gamma \colon D^{2k} \to M
$
such that \( u_\gamma|_{\partial D^{2k}} = \gamma \).  
Define a map  
$
A \colon \Omega_{2k-1}(M) \to \mathbb{R} / \mathcal{P}_{2k}(M, \omega)
$
by  
\begin{eqnarray}
\label{e:defofA}
A(\gamma):=\int_{D^{2k}} u_\gamma^*(\omega^k)
\end{eqnarray}
  where \( u_\gamma \) is any extension of \( \gamma \) as above.
 Since   $A$  takes values in $\mathbb{R}/
\mathcal{P}_{2k}(M,\omega)$, 
it follows that \( A(\gamma) \)  is
independent of the choice of extension \( u_\gamma \).

Let \(\{ \psi_{\underline{t}}\}  \in \Omega_{2k-1}(\textup{Ham}(M,\omega)) \) and fix a point 
\( p \in M \). Define the trace of \( p \) under \( \{ \psi_{\underline{t}}\} \) by  
\[
\gamma(\{\psi_{\underline{t}}\}_{{\underline{t}}\in S^{2k-1}}, p) := \{\psi_{\underline{t}}(p) \mid \underline{t} \in S^{2k-1}\} \subset M.
\]
This trace defines a topological \( (2k-1) \)-sphere in \( M \), and thus determines an element of \( \Omega_{2k-1}(M) \).  
We now define the map  
$
\mathcal{A} \colon \Omega_{2k-1}(\textup{Ham}(M,\omega)) \to \mathbb{R} / \mathcal{P}_{2k}(M,\omega)
$  
by averaging the value of \( A \)  over \( M \) with respect to the symplectic volume form. Specifically,  
\begin{equation}
\label{e:accion}
\mathcal{A}(\psi_{\underline{t}}) := \frac{1}{\textup{Vol}(M, \omega^n/n!)} 
\int_M A\left(\gamma(\{\psi_{\underline{t}}\}_{\underline{t} \in S^{2k-1}}, p)\right) \frac{\omega^n}{n!}.
\end{equation}

This definition yields a well-defined function on 
$\Omega_{2k-1}(\textup{Ham}(M,\omega))$. 
Next, we show that the map \( \mathcal{A} \) descends to the group 
\( \pi_{2k-1}(\textup{Ham}(M,\omega)) \).

\begin{lem}
If \( \psi^{(0)}_{\underline{t}}, \psi^{(1)}_{\underline{t}} \in \Omega_{2k-1}(\textup{Ham}(M,\omega)) \) 
are homotopic elements and
$
[\psi^{(0)}_{\underline{t}}] = [\psi^{(1)}_{\underline{t}}] \in \pi_{2k-1}(\textup{Ham}(M,\omega)),
$  
then
\[
\mathcal{A}(\psi^{(0)}_{\underline{t}}) = \mathcal{A}(\psi^{(1)}_{\underline{t}}).
\]
\end{lem}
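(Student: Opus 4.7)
The plan is to track the difference $\mathcal{A}(\psi^{(1)}_{\underline{t}}) - \mathcal{A}(\psi^{(0)}_{\underline{t}})$ explicitly along a chosen based homotopy, express it as a double integral over $[0,1]\times S^{2k-1}\times M$, and then show this integral vanishes by combining the symplectic invariance of $\omega^n/n!$ with Stokes' theorem on the closed manifold $M$.

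First, I would fix a smooth based homotopy $H\colon [0,1]\times S^{2k-1}\to\textup{Ham}(M,\omega)$ with $H(0,\cdot)=\psi^{(0)}_{\underline{t}}$, $H(1,\cdot)=\psi^{(1)}_{\underline{t}}$, and $H(s,\underline{t}_0)=\id$ at the basepoint $\underline{t}_0\in S^{2k-1}$ for every $s$. For each $p\in M$ this produces a cylinder $C_p(s,\underline{t}):=H(s,\underline{t})(p)$ on $[0,1]\times S^{2k-1}$, whose two boundary slices are the traces $\gamma^{(i)}_p:=\gamma(\{\psi^{(i)}_{\underline{t}}\},p)$ and which collapses $[0,1]\times\{\underline{t}_0\}$ to $p$. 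Using $\pi_{2k-1}(M)=0$, I pick a smooth $p$-family of extensions $u^{(0)}_p\colon D^{2k}\to M$ of $\gamma^{(0)}_p$ and then set $u^{(1)}_p:=u^{(0)}_p\cup C_p$ by attaching the cylinder; this is an extension of $\gamma^{(1)}_p$ for which additivity of the integral yields
\[
\int_{D^{2k}}(u^{(1)}_p)^*\omega^k \;=\;\int_{D^{2k}}(u^{(0)}_p)^*\omega^k\;+\;\int_{[0,1]\times S^{2k-1}}C_p^*\omega^k.
\]
Averaging over $p$ with weight $\omega^n(p)/n!$ and dividing by $\textup{Vol}(M,\omega^n/n!)$ turns this into
\[
\mathcal{A}(\psi^{(1)}_{\underline{t}})-\mathcal{A}(\psi^{(0)}_{\underline{t}})\;=\;\frac{1}{\textup{Vol}(M,\omega^n/n!)}\int_M\!\int_{[0,1]\times S^{2k-1}}C_p^*\omega^k\,\frac{\omega^n(p)}{n!}
\]
in $\mathbb{R}/\mathcal{P}_{2k}(M,\omega)$.

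Swapping the order of integration and changing variables $p\mapsto q=H(s,\underline{t})(p)$ — which is permissible because each $H(s,\underline{t})$ is symplectic and therefore preserves $\omega^n/n!$ — the $\partial_s$- and $\partial_{t^a}$-derivatives of $C_p$ get re-expressed at $q$ as Hamiltonian vector fields $X_{F_0},X_{F_1},\ldots,X_{F_{2k-1}}$ on $M$, whose Hamiltonians $F_i=F_i(s,\underline{t})$ are determined by the homotopy. The right-hand side becomes
\[
\int_{[0,1]\times S^{2k-1}}\Bigl(\int_M\omega^k(X_{F_0},\ldots,X_{F_{2k-1}})\,\frac{\omega^n}{n!}\Bigr)\,ds\,dt^1\cdots dt^{2k-1},
\]
and the heart of the argument — the step I expect to be the main obstacle — is to show that the inner integral vanishes identically for any smooth $F_0,\ldots,F_{2k-1}$ on the closed manifold $M$. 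I would prove this via the pointwise algebraic identity
\[
\omega^k(X_{F_0},\ldots,X_{F_{2k-1}})\,\frac{\omega^n}{n!}\;=\;k!\;dF_0\wedge dF_1\wedge\cdots\wedge dF_{2k-1}\wedge\frac{\omega^{n-k}}{(n-k)!},
\]
whose two sides depend on the $F_i$ only through their differentials and are skew-symmetric in them, so it suffices to verify the equality at a single point in Darboux coordinates on linear Hamiltonians $F_i\in\{x_a,y_a\}$. Since $\omega^{n-k}$ is closed, the right-hand side equals $k!\,d\bigl(F_0\,dF_1\wedge\cdots\wedge dF_{2k-1}\wedge\omega^{n-k}/(n-k)!\bigr)$, and Stokes' theorem on the closed manifold $M$ forces its integral to be zero. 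Combined with the preceding reduction, this yields $\mathcal{A}(\psi^{(1)}_{\underline{t}})=\mathcal{A}(\psi^{(0)}_{\underline{t}})$ in $\mathbb{R}/\mathcal{P}_{2k}(M,\omega)$, completing the proof.
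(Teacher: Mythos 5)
Your proof is correct, and although it opens exactly as the paper's proof does (evaluate the homotopy at $p$ to obtain a cylinder joining the two traces, then use additivity of the capping integrals), it diverges at the decisive step --- and the divergence matters. The paper disposes of the cylinder term by asserting that $\int_{[0,1]\times S^{2k-1}}\mathbf{\Psi}_p^*(\omega^k)$ vanishes \emph{for each fixed} $p$, ``since $\mathbf{\Psi}_p$ is a trace of a family of Hamiltonian diffeomorphisms,'' and from this it deduces the pointwise identity $A(\gamma(\{\psi^{(0)}_{\underline{t}}\},p))=A(\gamma(\{\psi^{(1)}_{\underline{t}}\},p))$. You claim no such pointwise vanishing: you integrate over $p$ first, use that each $H(s,\underline{t})$ preserves $\omega^n/n!$ to transport the integrand to a fixed copy of $M$, and then kill the inner integral $\int_M\omega^k(X_{F_0},\ldots,X_{F_{2k-1}})\,\omega^n/n!$ by your algebraic identity plus Stokes' theorem. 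That identity is correct: both sides depend only on $dF_0,\ldots,dF_{2k-1}$, skew-multilinearly, so it suffices to check Darboux covectors, where both sides vanish unless the covectors form $k$ conjugate pairs $dx_a,dy_a$, in which case both equal $k!\,\omega^n/n!$. The one step you should justify with a reference is the standard fact that the $s$- and $\underline{t}$-derivatives of a smooth family in $\textup{Ham}(M,\omega)$ are Hamiltonian vector fields (e.g.\ McDuff--Salamon); and note that a globally smooth $p$-family of cappings $u^{(0)}_p$ need not exist, but this is cosmetic since your argument only uses the cylinder term $\int C_p^*(\omega^k)$, which is globally defined and smooth in $p$.

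In fact your longer route appears to be necessary, because the paper's pointwise claim is false. For any path $\phi_s$ in $\textup{Ham}(M,\omega)$ with $\phi_0=\id$, the conjugated family $\{\phi_s\psi_{\underline{t}}\phi_s^{-1}\}$ is a based homotopy from $\{\psi_{\underline{t}}\}$ to $\{\phi_1\psi_{\underline{t}}\phi_1^{-1}\}$, and since composing a capping with the symplectomorphism $\phi_1$ does not change its $\omega^k$-area, one gets $A(\gamma(\{\phi_1\psi_{\underline{t}}\phi_1^{-1}\},p))=A(\gamma(\{\psi_{\underline{t}}\},\phi_1^{-1}(p)))$. Pointwise homotopy invariance would therefore force the function $p\mapsto A(\gamma(\{\psi_{\underline{t}}\},p))$ to be constant on $M$ (as $\textup{Ham}$ acts transitively); but Lemma~\ref{l_:laA} computes this very function for the unitary families on $\mathbb{C}P^n$ and it is visibly nonconstant. (For $M=S^2$, $k=1$, the same failure is seen by conjugating the rotation loop about the $z$-axis by a path of rotations about the $x$-axis: the pointwise enclosed areas change.) What survives conjugation is precisely the average against $\omega^n/n!$ --- because $\phi_1^{-1}$ preserves the Liouville measure --- and that averaged statement is exactly what your argument proves. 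So your proposal is not just a different proof; it supplies the averaging mechanism that the lemma genuinely requires.
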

\begin{proof}
Let  
$
\mathbf{\Psi} \colon [0,1] \times S^{2k-1} \to \textup{Ham}(M, \omega)
$  
be a smooth homotopy between \( \psi^{(0)}_{\underline{t}} \) and \( \psi^{(1)}_{\underline{t}} \). 
Fix a point \( p \in M \), and denote by  
$
u_0, u_1 \colon D^{2k} \to M
$  
the capping maps of the \( (2k-1) \)-spheres \( \gamma(\{\psi^{(0)}_{\underline{t}}\}, p) \)
 and \( \gamma(\{\psi^{(1)}_{\underline{t}}\}, p) \), respectively.

Evaluating the homotopy \( \mathbf{\Psi} \) at \( p \), we obtain a smooth map  
$
\mathbf{\Psi}_p \colon [0,1] \times S^{2k-1} \to M
$  
that gives a homotopy between the spheres 
\( \gamma(\{\psi^{(0)}_{\underline{t}}\}, p) \)
 and \( \gamma(\{\psi^{(1)}_{\underline{t}}\}, p) \)
in \( M \). As a result, we may glue the maps \( u_0 \), \( \mathbf{\Psi}_p \), 
and \( \overline{u_1} \) (the map \( u_1 \) with reversed orientation on the domain) to 
obtain a smooth map from the \( 2k \)-sphere into \( M \).
Thus, the sum of the corresponding integrals satisfies
$$
\int_{D^{2k}} u_0^*(\omega^k) + \int_{[0,1] \times S^{2k-1}} \mathbf{\Psi}_p^*(\omega^k) + \int_{D^{2k}} \overline{u_1}^*(\omega^k) = 0
$$
in \( \mathbb{R}/\mathcal{P}_{2k}(M,\omega) \).

Now, observe that the second integral vanishes,
since \( \mathbf{\Psi}_p \) is a trace of a family of Hamiltonian diffeomorphisms.
It follows that
\[
\int_{D^{2k}} u_0^*(\omega^k) = \int_{D^{2k}} u_1^*(\omega^k)
\]
modulo \( \mathcal{P}_{2k}(M,\omega) \), and thus
$
A(\gamma(\{\psi^{(0)}_{\underline{t}}\}, p)) = A(\gamma(\{\psi^{(1)}_{\underline{t}}\}, p)).
$
Since this equality holds for all \( p \in M \), we conclude that
$
\mathcal{A}(\psi^{(0)}_{\underline{t}}) = \mathcal{A}(\psi^{(1)}_{\underline{t}}).
$
\end{proof}

Thus, the map  
$
\psi_{\underline{t}} \mapsto \mathcal{A}(\psi_{\underline{t}})
$  
descends to a well-defined map on homotopy classes,
\[
\mathcal{A}^{M}_{2k-1} \colon \pi_{2k-1}(\textup{Ham}(M, \omega)) \to \mathbb{R}/\mathcal{P}_{2k}(M, \omega),
\]
Moreover, by the definition of \( A \) in Equation~\eqref{e:defofA}, it satisfies the 
identities
\[
A(-\gamma) = -A(\gamma), \qquad 
A(\gamma_1 + \gamma_2) = A(\gamma_1) + A(\gamma_2),
\]
for all \( \gamma, \gamma_1, \gamma_2 \in \Omega_{2k-1}(M) \). 
These properties imply that $\mathcal{A}^{M}_{2k-1}$ is a group homomorphism. 
In order to avoid cumbersome notation, we will simply write \( \mathcal{A} \)
without explicitly indicating the underlying symplectic manifold or the degree of 
the homotopy group in which it is defined. Whenever there is a risk of confusion, 
the full notation will be reinstated.

We are now in a position to state a generalization of Weinstein’s morphism. Let \( (M, \omega) \) 
be a closed symplectic manifold of dimension \( 2n \), and fix \( k \in \{2, \ldots, n\} \). Assume 
that the homotopy group \( \pi_{2k-1}(M) \) is trivial. We define the {\em generalized Weinstein morphism}
 as the map
\[
\mathcal{A} \colon \pi_{2k-1}(\textup{Ham}(M, \omega)) \to \mathbb{R}/\mathcal{P}_{2k}(M, \omega),
\]
given by
\[
\mathcal{A}(\psi_{\underline{t}}) := \frac{1}{\textup{Vol}(M, \omega^n/n!)} \int_M 
A\big(\gamma(\{\psi_{\underline{t}}\}_{\underline{t} \in S^{2k-1}}, p)\big) \, \frac{\omega^n}{n!},
\]
for any \( \psi_{\underline{t}} \in \pi_{2k-1}(\textup{Ham}(M, \omega)) \).  

This construction generalizes the classical Weinstein morphism, which corresponds to 
the case \( k = 1 \). In higher degrees, the morphism \( \mathcal{A} \) captures obstructions 
to the triviality of higher homotopy classes of Hamiltonian diffeomorphisms, encoded via 
their averaged action over the symplectic manifold.

In \cite{weinstein-coho}, an alternative formulation of the classical Weinstein morphism  
\[
\mathcal{A}\colon \pi_{1}(\textup{Ham}(M, \omega)) \to \mathbb{R}/\mathcal{P}_{2}(M, \omega)
\]
is presented in addition to the one described here. It is given by
\[
\mathcal{A}(\psi^H) = \int_{D^2} u_{x_0}^*(\omega) + \int_0^1 H_t(x_0) \, dt,
\]
where \( H_t \) is a normalized Hamiltonian function generating the loop \( \psi^H \), \( x_0 \in M \) is a 
fixed base point, and \( u_{x_0} \colon D^2 \to M \) is a smooth map capping the 
loop \( \{ \psi_t^H(x_0) \} \). 
It is worth emphasizing that this alternative description does not admit a natural generalization 
for \( k > 1 \).

Additionally, it is worth noting that the definition of the morphism \( \mathcal{A} \) for \( k > 1 \) 
does not involve any explicit reference to Hamiltonian functions. This omission is, in part, due 
to the fact that the inclusion  
\[
\textup{Ham}(M, \omega) \hookrightarrow \textup{Symp}_0(M, \omega)
\]  
induces an isomorphism on the homotopy groups \( \pi_k \) for \( k > 1 \). 
Consequently, the use of explicit Hamiltonian functions is not necessary in defining \( \mathcal{A} \)
 in these higher-degree settings.

\subsection{Cartesian Products}

We now turn to the analysis of Weinstein’s morphism under symplectic Cartesian products. 
Such an analysis broadens the range of potential applications of the morphism, 
particularly in light of explicit computations in cases such as
\[
\pi_{2k-1}\!\big(\mathrm{Ham}(\mathbb{C}P^n, \omega_{\mathrm{FS}})\big)
\quad \text{and} \quad
\pi_{2k-1}\!\big(\mathrm{Ham}(\widetilde{\mathbb{C}P}{}^n, \widetilde{\omega}_\rho)\big).
\]
Recall that if 
\(\psi\) and \(\phi\) are Hamiltonian diffeomorphisms of \((M,\omega)\) and \((N,\eta)\), respectively, 
then their product \(\psi \times \phi\) defines a Hamiltonian diffeomorphism of the product symplectic manifold 
\((M \times N, \omega \oplus \eta)\).

\begin{prop}
\label{p:product}
Let $(M,\omega)$ and $(N,\eta)$ be closed symplectic manifolds, and let 
$k\in \mathbb{Z}$ be an integer such that 
$
1\leq k \leq \min\{m,n\},
$
where $2m=\dim M$ and $2n=\dim N$. Assume that the groups
$\pi_{2k-1}(M)$ and $\pi_{2k-1}(N)$ are trivial
If $[\psi_{\underline{t}}]\in \pi_{2k-1}(\mathrm{Ham}(M,\omega))$ and 
$[\phi_{\underline{t}}]\in \pi_{2k-1}(\mathrm{Ham}(N,\eta))$, then for 
$[\psi_{\underline{t}} \times \phi_{\underline{t}}]\in \pi_{2k-1}(\mathrm{Ham}(M\times N,\omega\oplus\eta))$
we have
\[
\mathcal{A}^{M\times N}[\psi_{\underline{t}} \times \phi_{\underline{t}}]
=\big[\,\mathcal{A}^{M}[\psi_{\underline{t}}]+\mathcal{A}^{N}[\phi_{\underline{t}}]\,\big]
\in \mathbb{R}/\mathcal{P}_{2k}(M\times N,\omega\oplus \eta).
\]
\end{prop}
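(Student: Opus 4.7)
The plan is to exploit the fact that $\mathcal{A}$ is a group homomorphism and to split $\psi_{\underline{t}}\times\phi_{\underline{t}}$ into a product of two commuting one-factor families, each of which reduces transparently to a Weinstein morphism on one side.

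First, I would use the pointwise identity $\psi_{\underline{t}}\times\phi_{\underline{t}} = (\psi_{\underline{t}}\times\id_N)\cdot(\id_M\times\phi_{\underline{t}})$ in $\mathrm{Ham}(M\times N,\omega\oplus\eta)$. Since that space is a topological group, the classical fact that pointwise multiplication of $n$-spheres based at the identity represents the $\pi_n$-sum gives
\[
[\psi_{\underline{t}}\times\phi_{\underline{t}}] = [\psi_{\underline{t}}\times\id_N] + [\id_M\times\phi_{\underline{t}}]
\]
in $\pi_{2k-1}(\mathrm{Ham}(M\times N,\omega\oplus\eta))$. Because $\mathcal{A}^{M\times N}$ is a group homomorphism (already established in this section), the problem reduces to evaluating it on the two summands.

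Second, I would compute each summand by choosing a product-type capping disk. For $[\psi_{\underline{t}}\times\id_N]$, fix $(p,q)\in M\times N$ and let $u_p\colon D^{2k}\to M$ cap $\gamma(\{\psi_{\underline{t}}\},p)$; then $w(x):=(u_p(x),q)$ is a cap of the trace in $M\times N$. Since the $N$-coordinate of $w$ is constant, $w^*(\omega\oplus\eta)=u_p^*\omega$, and hence $\int_{D^{2k}}w^*(\omega\oplus\eta)^k = \int_{D^{2k}}u_p^*\omega^k$. A short check shows $\mathcal{P}_{2k}(M,\omega)\subseteq \mathcal{P}_{2k}(M\times N,\omega\oplus\eta)$, by pushing $2k$-cycles forward along $M\hookrightarrow M\times\{q\}$ and noting that $(\omega\oplus\eta)^k$ restricts to $\omega^k$ on such a slice. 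Applying Fubini to $(\omega\oplus\eta)^{m+n}/(m+n)! = (\omega^m/m!)\wedge(\eta^n/n!)$ (so that $\textup{Vol}(M\times N)=\textup{Vol}(M)\,\textup{Vol}(N)$), the integrand depends only on $p\in M$, the integration over $N$ contributes a factor $\textup{Vol}(N)$ that cancels, and one obtains $\mathcal{A}^{M\times N}[\psi_{\underline{t}}\times\id_N] = \mathcal{A}^M[\psi_{\underline{t}}]$. The symmetric argument yields $\mathcal{A}^{M\times N}[\id_M\times\phi_{\underline{t}}] = \mathcal{A}^N[\phi_{\underline{t}}]$, and combining with the first step gives the claim.

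The main obstacle I anticipate is the first step: verifying rigorously that the pointwise product in the topological group $\mathrm{Ham}(M\times N,\omega\oplus\eta)$ represents the $\pi_{2k-1}$-sum of the two classes, so that the homomorphism property of $\mathcal{A}$ can be invoked. Once that identification is in place, the product capping $w(x)=(u_p(x),q)$ is precisely what avoids having to contend with the mixed cross terms in the binomial expansion of $(\omega\oplus\eta)^k$, and everything else is Fubini together with a small amount of period-group bookkeeping.
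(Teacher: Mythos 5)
Your proposal is correct and follows essentially the same route as the paper: the same decomposition $[\psi_{\underline{t}}\times\phi_{\underline{t}}]=[\psi_{\underline{t}}\times 1_N]*[1_M\times\phi_{\underline{t}}]$, the homomorphism property of $\mathcal{A}^{M\times N}$, and the evaluation of each one-factor class via a product capping disk. The details you supply (the cap $w(x)=(u_p(x),q)$, the inclusion $\mathcal{P}_{2k}(M,\omega)\subseteq\mathcal{P}_{2k}(M\times N,\omega\oplus\eta)$, and the Fubini/volume cancellation) are exactly what the paper compresses into the phrase ``it follows directly from the definition of $A$,'' and the step you flag as the main obstacle is the standard Eckmann--Hilton fact for topological groups, which the paper likewise uses without comment.
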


\begin{proof}
Fix $p \in M$ and $q \in N$. 
It follows directly from the definition of $A$ that for 
$[\psi_{\underline{t}}] \in \pi_{2k-1}(\mathrm{Ham}(M,\omega))$,
\[
A^{M\times N}\!\left(
\gamma\!\big(\{\psi_{\underline{t}} \times 1_N\}_{\underline{t} \in S^{2k-1}}, (p,q)\big)
\right)
= 
\left[\,
A^{M}\!\left(
\gamma\!\big(\{\psi_{\underline{t}}\}_{\underline{t} \in S^{2k-1}}, p\big)
\right)
\,\right],
\]
and therefore,
\begin{equation*}
\mathcal{A}^{M\times N}[\psi_{\underline{t}} \times 1_N] 
= 
\big[\,\mathcal{A}^{M}[\psi_{\underline{t}}]\,\big]
\;\in\; 
\mathbb{R}/\mathcal{P}_{2k}(M\times N,\omega \oplus \eta).
\end{equation*}

In $\pi_{2k-1}(\mathrm{Ham}(M\times N,\omega\oplus\eta))$ there is the 
decomposition
\[
[\psi_{\underline{t}} \times \phi_{\underline{t}}]
= [\psi_{\underline{t}} \times 1_N]\;*\;[1_M \times \phi_{\underline{t}}].
\]
Thus, from the above computation it follows that
\begin{align*}
\mathcal{A}^{M\times N}[\psi_{\underline{t}} \times \phi_{\underline{t}}]
&= \mathcal{A}^{M\times N}[\psi_{\underline{t}} \times 1_N] \;+\;
   \mathcal{A}^{M\times N}[1_M \times \phi_{\underline{t}}] \\
&= \big[\,\mathcal{A}^{M}[\psi_{\underline{t}}] 
   + \mathcal{A}^{N}[\phi_{\underline{t}}]\,\big],
\end{align*}
as claimed.
\end{proof}

Note that for closed symplectic manifolds $(M,\omega)$ and $(N,\eta)$ one has
\[
\mathcal{P}_{2}(M,\omega)\;+\;\mathcal{P}_{2}(N,\eta)
\;=\;
\mathcal{P}_{2}(M\times N,\omega\oplus\eta),
\qquad (k=1),
\]
while for $k>1$ we can only assert that
\[
\mathcal{P}_{2k}(M,\omega)\;+\;\mathcal{P}_{2k}(N,\eta)
\;\subset\;
\mathcal{P}_{2k}(M\times N,\omega\oplus\eta).
\]

We will use this result at the end of Section~\ref{hamdiff}, where
we find the value of the Weinstein morphism on
$\pi_{2k-1}\!\big(\mathrm{Ham}(\mathbb{C}P^n, \omega_{\mathrm{FS}})\big)$,
in order to provide the proof of 
Theorem~\ref{t:prod} stated in the Introduction.

\section{Application: $\pi_{2k-1}(\textup{Ham} (\mathbb{C}P^n,\omega_\textup{FS}) )$  for
$1\leq k\leq n$}
\label{hamdiff}


Recall that each matrix \( A \in U(n+1) \) induces a Hamiltonian diffeomorphism  
$
\psi_A \colon (\mathbb{C}P^n, \omega_{\textup{FS}}) \to (\mathbb{C}P^n, \omega_{\textup{FS}})
$  
defined by  
\[
\psi_A([w]) = [Aw],
\]  
where \( Aw \) denotes the standard matrix. This association 
defines a group morphism  
$
U(n+1) \to \textup{Ham}(\mathbb{C}P^n, \omega_{\textup{FS}}).
$  
The aim of this section is to study the image of the induced map on homotopy groups  
\[
\pi_{2k-1}(U(n+1)) \to \pi_{2k-1}(\textup{Ham}(\mathbb{C}P^n, \omega_{\textup{FS}})),
\]  
for integers \( k \in \{1, \ldots, n\} \), under the generalized Weinstein morphism \( \mathcal{A} \) 
introduced in the previous section. 
 Note that for these values of \( k \), the group 
\( \pi_{2k-1}(U(n+1)) \) is isomorphic to \( \mathbb{Z} \), and hence the analysis 
involves understanding the image of a generator.

To that end, consider the complex projective space \( (\mathbb{C}P^n, \omega_{\textup{FS}}) \), where the symplectic form is normalized so that the symplectic area of a complex projective line equals \( \pi \). With this normalization, there exists a symplectic embedding  
$
j \colon (B^{2n}(1), \omega_0) \hookrightarrow (\mathbb{C}P^n, \omega_{\textup{FS}})
$  
defined by  
\begin{eqnarray}
\label{e:simpemn}
(z_1, \ldots, z_n) \mapsto \left[ z_1 : \cdots : z_n : \sqrt{1 - \sum_{j=1}^n |z_j|^2} \right],
\end{eqnarray}
where the domain is the standard unit open ball in \( \mathbb{C}^n \) equipped with the standard symplectic form \( \omega_0 \). For a detailed proof of this symplectic embedding, we refer the reader to the Appendix of \cite{mcduffpol-packing}.
The image of this embedding is dense in \( \mathbb{C}P^n \), and its complement is the 
hyperplane  
$
\mathbb{C}P^{n-1} = \left\{ [w_1 : \cdots : w_n : 1] \right\}.
$  
Indeed, given a point \( [w] = [w_1 : \cdots : w_{n+1}] \in \mathbb{C}P^n \setminus \mathbb{C}P^{n-1} \), one can verify that  
\[
\left( 1 + \sum_{k=1}^n \left| \frac{w_k}{w_{n+1}} \right|^2 \right)^{-1/2} \left( \frac{w_1}{w_{n+1}}, \ldots, \frac{w_n}{w_{n+1}} \right) \in B^{2n}(1)
\]  
is mapped to \( [w] \) via the symplectic embedding \( j \).

Now consider the subgroup of unitary matrices of the form  
\begin{eqnarray}
\label{e:unit}
A = \begin{pmatrix}
A_1 & 0 \\
0 & 1
\end{pmatrix} \in U(n+1),
\end{eqnarray}
where \( A_1 \in U(n) \). Such matrices act trivially on the last coordinate and hence preserve the image of the embedding \( j \), that is,  
$
\psi_A \left( j(B^{2n}(1)) \right) = j(B^{2n}(1)).
$  
This leads to an inclusion homomorphism  
$
\iota \colon U(n) \hookrightarrow U(n+1)
$  
induced by the block structure in \eqref{e:unit}. Since the standard action of \( U(n) \) on \( B^{2n}(1) \subset \mathbb{C}^n \) preserves the Euclidean norm, it follows that  
\[
\psi_{\iota(A_1)} \circ j(z) = j(A_1 z), \quad \text{for all } z \in B^{2n}(1),\ A_1 \in U(n),
\]  
i.e., the embedding \( j \) is \( U(n) \)-equivariant with respect to the inclusion \( \iota \). In what follows, we will often omit explicit mention of \( \iota \) and simply treat elements of \( U(n) \) as acting on \( \mathbb{C}P^n \) via their inclusion in \( U(n+1) \).

Recall that the homotopy long exact sequence associated to the fibration  
$
U(n) \longrightarrow U(n+1) \longrightarrow S^{2n+1}
$  
implies that the inclusion map \( U(n) \hookrightarrow U(n+1) \), as described in \eqref{e:unit}, induces an isomorphism  
$
\pi_j(U(n)) \to \pi_j(U(n+1))
$  
for all \( 1 \leq j \leq 2n \). 
By successively applying this 
observation, we have the following 
description of unitary matrices that can represent a
generator of $\pi _{2k-1}(U(n+1))$.

\begin{lem}
\label{l:matrix}
Let \(1 \leq k \leq n\). Then a generator of the group \(\pi_{2k-1}(U(n+1))\) can be represented by 
unitary matrices of the form  
\begin{eqnarray}
\label{e:mathk}
\begin{pmatrix}
A & 0 \\
0 & I_{n-k+1}
\end{pmatrix} \in U(n+1),
\end{eqnarray}
where \(A \in U(k)\) and \(I_{n-k+1}\) denotes the \((n-k+1) \times (n-k+1)\) identity matrix.
\end{lem}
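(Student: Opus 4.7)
The plan is to reduce the lemma to the classical identification $\pi_{2k-1}(U(k))\cong\mathbb{Z}$ by iterating the stabilization isomorphism cited in the paragraph just before the statement, so that a representative family can be built inside $U(k)$ and then embedded block-diagonally into $U(n+1)$.

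First I would check the range condition: for every integer $\ell$ with $k\le \ell\le n$ one has $2k-1\le 2\ell$, so the long exact sequence of the fibration
\[
U(\ell)\hookrightarrow U(\ell+1)\longrightarrow S^{2\ell+1}
\]
yields an isomorphism
$\pi_{2k-1}(U(\ell))\xrightarrow{\cong}\pi_{2k-1}(U(\ell+1))$
induced by the block inclusion $B\mapsto \mathrm{diag}(B,1)$. Composing these for $\ell=k,k+1,\ldots,n$, the block-diagonal embedding
\[
U(k)\hookrightarrow U(n+1),\qquad A\longmapsto \begin{pmatrix} A & 0 \\ 0 & I_{n-k+1}\end{pmatrix},
\]
induces an isomorphism on $\pi_{2k-1}$.

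Next I would invoke the classical fact $\pi_{2k-1}(U(k))\cong\mathbb{Z}$ (a consequence of Bott periodicity, since $2k-1$ lies in the stable range for $U(k)$). Choosing any smooth family $\{A_{\underline{t}}\}_{\underline{t}\in S^{2k-1}}\subset U(k)$ that represents a generator and then applying the block inclusion above produces matrices of the form \eqref{e:mathk}, which by the previous paragraph represents a generator of $\pi_{2k-1}(U(n+1))$.

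I do not expect any real obstacle: the argument is essentially bookkeeping on top of the long exact sequence the excerpt already cites. The only point that warrants a brief check is that the base group $\pi_{2k-1}(U(k))$ is infinite cyclic rather than trivial; this is settled directly by Bott periodicity, or equivalently by noting that $U(k)\hookrightarrow U(N)$ is a $\pi_{2k-1}$-isomorphism for all $N\ge k$, which pushes the question into the stable range where the answer is standard.
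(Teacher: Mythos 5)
Your argument is correct and is essentially the paper's own: the lemma is justified there by exactly this iteration of the stabilization isomorphism $\pi_{2k-1}(U(\ell))\xrightarrow{\cong}\pi_{2k-1}(U(\ell+1))$ coming from the fibration $U(\ell)\to U(\ell+1)\to S^{2\ell+1}$, applied for $\ell=k,\ldots,n$ to the block-diagonal inclusion. Your additional check that $\pi_{2k-1}(U(k))\cong\mathbb{Z}$ via Bott periodicity matches the paper's remark that $\pi_{2k-1}(U(n+1))\cong\mathbb{Z}$ in this range.
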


Fix \(k\) such that \(1 \leq k \leq n\), and consider a generator 
\(\{ A_{\underline{t}} \}_{{\underline{t}} \in S^{2k-1}}\) of 
 the group
\(\pi_{2k-1}(U(n+1))\), where each matrix \(A_{\underline{t}}\) is of the
 form described in Lemma \ref{l:matrix}. As before, let \(A_{{\underline{t}},1} 
\in U(n)\) denote the matrix obtained by omitting the last row and column
 of \(A_{\underline{t}}\).  
Now, fix a generic point \({\bf z} = (z_1, \ldots, z_n) \in B^{2n}(1)
\). The trace  
$
\{ A_{{\underline{t}},1} {\bf z} \mid {\underline{t}} \in S^{2k-1} \} \subset B^{2n}(1)
$  
under the family \(\{ A_{{\underline{t}},1} \}\) describes a \((2k-1)\)-sphere centered at  
$
(0, \ldots, 0, z_{k+1}, \ldots, z_n),
$  
with radius \(\sqrt{|z_1|^2 + \cdots + |z_k|^2}\), and contained in the affine complex \(k\)-plane  
$
\{ (u_1, \ldots, u_k, z_{k+1}, \ldots, z_n) \mid u_j \in \mathbb{C} \}.
$  
In particular, when \(z_1 = \cdots = z_k = 0\), the trace \(\{ A_{{\underline{t}},1} {\bf z} \mid {\underline{t}} \in S^{2k-1} \}\) consists of a single point.

We claim that the element  
$
[\psi_{A_{\underline{t}}}] \in \pi_{2k-1}(\textup{Ham}(\mathbb{C}P^n,\omega_\textup{FS}))
$  
is nontrivial.  
Note that \([\mathbb{C}P^k]\) generates the homology group \(H_{2k}(\mathbb{C}P^n;\mathbb{Z})\), and the symplectic volume satisfies  
$
\langle \omega_\textup{FS}^k, \mathbb{C}P^k \rangle = {\pi^k}/{k!}.
$  
Hence, the group of periods is given by  
$
\mathcal{P}_{2k}(\mathbb{C}P^n,\omega_\textup{FS}) = \left\langle {\pi^k}/{k!} \right\rangle.
$  
Since \(\pi_{2k-1}(\mathbb{C}P^n) = 0\), the generalized Weinstein morphism  
\[
\mathcal{A} \colon \pi_{2k-1}(\textup{Ham}(\mathbb{C}P^n,\omega_\textup{FS})) \to \mathbb{R}
 / \left\langle {\pi^k}/{k!} \right\rangle
\]  
is well-defined.
We now compute \(\mathcal{A}([\psi_{A_{\underline{t}}}])\).  
By Lemma \ref{l:matrix}, each diffeomorphism in the family \(\{\psi_{A_{\underline{t}}}\}\) has the form  
\[
\psi_{A_{\underline{t}}}[w_1:\cdots:w_{n+1}] = [A_{{\underline{t}},1}(w_1, \ldots, w_k) : w_{k+1} : \cdots : w_{n+1}],
\]  
for some \(A_{{\underline{t}},1} \in U(k)\). In particular, the image of the embedding \(j(B^{2n}(1)) \subset \mathbb{C}P^n\) is invariant under each diffeomorphism \(\psi_{A_{\underline{t}}}\), and the restriction of the action to the embedded ball corresponds to the standard \(U(k)\)-action:
\begin{eqnarray}
\label{e:definc}
A_{\underline{t}}(z_1, \ldots, z_n) = (A_{{\underline{t}},1}(z_1, \ldots, z_k), z_{k+1}, \ldots, z_n). 
\end{eqnarray}
As observed earlier, for a generic point \({\bf z} \in B^{2n}(1)\), the set \(\{ A_{{\underline{t}},1} {\bf z} \mid {\underline{t}} \in S^{2k-1} \}\) traces out the boundary of a \(2k\)-dimensional ball of radius \(\sqrt{|z_1|^2 + \cdots + |z_k|^2}\), fully contained in \(B^{2n}(1)\).

As before, for \([{\bf w}] \in \mathbb{C}P^n\), we denote by  
$
\gamma(\{\psi_{A_{\underline{t}}}\}, [{\bf w}])
$  
the trace of \([{\bf w}]\) under the family of Hamiltonian diffeomorphisms \(\{\psi_{A_{\underline{t}}}\}\).

\begin{lem}
\label{l_:laA}
Let \([ {\bf w} ] = [w_1:\cdots:w_{n+1}] \in \mathbb{C}P^{n} \setminus \mathbb{C}P^{n-1}\) 
be a generic point as described above. Then under the family \(\{ \psi_{A_{\underline{t}}} \}\) ,
\[
A\big( \gamma( \{ \psi_{A_{\underline{t}}} \}, [{\bf w}] ) \big) =
\frac{\pi^k}{k!} \left( \frac{ \left| \frac{w_1}{w_{n+1}} \right|^2 + \cdots + \left| \frac{w_k}{w_{n+1}} \right|^2 }{ 1 + \sum_{j=1}^{n} \left| \frac{w_j}{w_{n+1}} \right|^2 } \right)^k.
\]
\end{lem}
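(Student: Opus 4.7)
The plan is to push the computation through the dense symplectic chart
\(j\colon (B^{2n}(1),\omega_0)\hookrightarrow (\mathbb{C}P^n,\omega_{\textup{FS}})\)
of \eqref{e:simpemn}, where \(j^{*}\omega_{\textup{FS}}=\omega_0\) and the problem reduces to an
elementary Euclidean integral on an affine complex subspace of \(\mathbb{C}^n\).

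First, I will locate the base point and the trace inside this chart. Since
\([\mathbf{w}]\in \mathbb{C}P^n\setminus\mathbb{C}P^{n-1}\) lies in the image of \(j\),
direct inversion of \eqref{e:simpemn} yields
\(\mathbf{z}=(z_1,\dots,z_n):=j^{-1}([\mathbf{w}])\in B^{2n}(1)\) with
\[
z_j=\bigl(1+\textstyle\sum_{\ell=1}^n|w_\ell/w_{n+1}|^2\bigr)^{-1/2}\,\frac{w_j}{w_{n+1}}.
\]
By Lemma~\ref{l:matrix} each \(A_{\underline{t}}\) has the block form
\(\mathrm{diag}(A_{\underline{t},1},I_{n-k+1})\) with \(A_{\underline{t},1}\in U(k)\), so
\eqref{e:definc} says that \(\{\psi_{A_{\underline{t}}}\}\) preserves \(j(B^{2n}(1))\) and acts on
ball coordinates by
\[
(z_1,\dots,z_n)\longmapsto \bigl(A_{\underline{t},1}(z_1,\dots,z_k),\,z_{k+1},\dots,z_n\bigr).
\]
Thus the image of \(\gamma(\{\psi_{A_{\underline{t}}}\},[\mathbf{w}])\) equals \(j(S_r)\),
where \(S_r\subset\Pi_{\mathbf{z}}\) is the \((2k-1)\)-sphere of radius
\(r:=(|z_1|^2+\cdots+|z_k|^2)^{1/2}\) centered at \((0,\dots,0,z_{k+1},\dots,z_n)\) inside
the affine complex \(k\)-plane
\[
\Pi_{\mathbf{z}}:=\{(u_1,\dots,u_k,z_{k+1},\dots,z_n)\,:\,u_i\in\mathbb{C}\};
\]
and since \(|\mathbf{z}|^{2}<1\), the closed \(2k\)-ball \(\overline{B}\subset\Pi_{\mathbf{z}}\)
bounded by \(S_r\) lies entirely in \(B^{2n}(1)\).

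The next step is to take \(j\) composed with the radial parametrization of \(\overline{B}\)
as the capping map \(u_\gamma\colon D^{2k}\to\mathbb{C}P^n\) in the defining formula
\eqref{e:defofA}. Using \(j^{*}\omega_{\textup{FS}}=\omega_0\), the integral becomes the
Euclidean integral of \(\omega_0^{k}\) over the Euclidean \(2k\)-ball
\(B^{2k}(r)\subset\mathbb{C}^k\). This is a standard elementary calculation: the symplectic
volume of \(B^{2k}(r)\) equals \(\pi^{k}r^{2k}/k!\), which is exactly the constant appearing
in the lemma. Substituting
\[
r^{2}=\frac{\sum_{j=1}^{k}|w_j/w_{n+1}|^{2}}{1+\sum_{j=1}^{n}|w_j/w_{n+1}|^{2}}
\]
then yields the stated formula.

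The main point requiring care is the well-definedness: the parametrized trace
\(S^{2k-1}\to S_r\) may cover its image with the nontrivial degree coming from the
evaluation map \(\pi_{2k-1}(U(k))\to\pi_{2k-1}(S^{2k-1})\), and different smooth cappings of
the same parametrization differ by a \(2k\)-cycle in \(\mathbb{C}P^n\) whose
\(\omega_{\textup{FS}}^{k}\)-integral lies in
\(\mathcal{P}_{2k}(\mathbb{C}P^n,\omega_{\textup{FS}})=\langle \pi^{k}/k!\rangle\). This
is precisely what makes \(A(\cdot)\) well defined in \(\mathbb{R}/\mathcal{P}_{2k}\), so any
degree-compatible smooth filling inside \(\overline{B}\) reproduces the Euclidean answer
above modulo \(\pi^{k}/k!\), which is all that the lemma asserts.
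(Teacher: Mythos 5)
Your first three paragraphs are, in substance, the paper's own proof: invert the chart \(j\) of \eqref{e:simpemn} to place the base point at \(\mathbf{z}\in B^{2n}(1)\); use Lemma~\ref{l:matrix} and \eqref{e:definc} to see that the trace lies on the round \((2k-1)\)-sphere \(S_r\) of radius \(r=(|z_1|^2+\cdots+|z_k|^2)^{1/2}\) inside the affine \(k\)-plane \(\Pi_{\mathbf{z}}\); cap it by the flat \(2k\)-ball in that plane; record the enclosed symplectic volume \(\pi^k r^{2k}/k!\); and substitute the expression for \(r^2\) in terms of \([\mathbf{w}]\). Up to that point your argument and the paper's coincide step for step.

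The problem is your final paragraph, and your argument genuinely leans on the false claim made there. Well-definedness of \(A\) modulo \(\mathcal{P}_{2k}(\mathbb{C}P^n,\omega_{\textup{FS}})=\langle\pi^k/k!\rangle\) says that two cappings of the \emph{same} boundary map give the same value; it does \emph{not} allow you to replace the actual trace \(\underline{t}\mapsto\psi_{A_{\underline{t}}}([\mathbf{w}])\), which may cover its image sphere \(j(S_r)\) with some degree \(d\), by the degree-one round parametrization whose flat filling you integrated. If the trace has degree \(d\) on \(S_r\), then by Stokes' theorem \emph{every} capping of it (in particular the radial cone on the trace itself) integrates to \(d\cdot\pi^k r^{2k}/k!\) modulo \(\pi^k/k!\); since \(0<r<1\) varies continuously with \([\mathbf{w}]\), this is congruent to \(\pi^k r^{2k}/k!\) only when \(d=1\). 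And the degree here is precisely the image of the generator under the evaluation homomorphism \(\pi_{2k-1}(U(k))\to\pi_{2k-1}(S^{2k-1})\) that you yourself invoke; the image of that homomorphism is the subgroup \((k-1)!\,\mathbb{Z}\), so \(d=\pm(k-1)!\), which is \(\pm1\) only for \(k=1,2\). Hence the sentence ``any degree-compatible smooth filling reproduces the Euclidean answer modulo \(\pi^k/k!\)'' is not a valid way to discharge the issue, and as written your proof establishes the stated formula only for \(k\le 2\). To be fair, the paper's proof checks only that the \emph{image} of the boundary of its capping equals the trace, never that the parametrizations agree, so you have isolated a real subtlety rather than invented one; but closing it requires either proving the degree is \(\pm1\) or carrying the factor \(d\) through the computation, not an appeal to the well-definedness of \(A\).
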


\begin{proof}
We carry out the computation on \((B^{2n}(1), \omega_0)\), and then use the 
\(\iota\)-equivariant symplectic embedding \(j\) to translate the result into 
\((\mathbb{C}P^n, \omega_\textup{FS})\).

As discussed above, for a generic point \({\bf z} = (z_1, \ldots, z_n) \in B^{2n}(1)\), 
its trace under the family \(\{ A_{\underline{t},1} \}\) describes a \((2k-1)\)-sphere.
Define the map
\[
u_{\gamma( A_{\underline{t},1}, {\bf z} )} \colon D^{2k} \to B^{2n}(1)
\]
as the composition of a radial rescaling of the unit ball into the closed \(2k\)-ball 
of radius \(r_0 := \sqrt{|z_1|^2 + \cdots + |z_k|^2}\), followed by inclusion into 
the affine subspace  \(\{(u_1, \ldots, u_k, z_{k+1}, \ldots, z_n) \mid u_j \in 
\mathbb{C} \}\) of \(\mathbb{C}^n\). Explicitly,
\[
u_{\gamma( A_{\underline{t},1}, {\bf z} )}(v_1, \ldots, v_k) =
(r_0 v_1, \ldots, r_0 v_k, z_{k+1}, \ldots, z_n).
\]

The image of the boundary \(\partial D^{2k}\) under this map is the \((2k-1)\)-sphere 
traced out by \(\{ A_{\underline{t},1} {\bf z} \mid \underline{t} \in S^{2k-1} \}\), 
and the symplectic area enclosed by this sphere is given by
\[
A\left( \gamma( A_{\underline{t},1}, {\bf z} ) \right) =
\frac{\pi^k}{k!} \left( |z_1|^2 + \cdots + |z_k|^2 \right)^k.
\]

Now, since the symplectic embedding \(j \colon (B^{2n}(1), \omega_0) \to 
(\mathbb{C}P^n, \omega_\textup{FS})\) is \(\iota\)-equivariant, we have
\[
j\left( \gamma( A_{\underline{t},1}, {\bf z} ) \right) =
\gamma( \{ \psi_{A_{\underline{t}}} \}, [{\bf w}] ),
\]
where \([{\bf w}] = j({\bf z})\). Thus, the image of the disk \(D^{2k}\) under the 
composition \(j \circ u_{\gamma( A_{\underline{t},1}, {\bf z} )}\) is a bounding 
disk for the trace of \([{\bf w}]\) under the family \(\{ \psi_{A_{\underline{t}}} \}\) 
in \(\mathbb{C}P^n\).
Finally, from the explicit expression of \(j\) given in 
\eqref{e:simpemn}, it follows that 
\[
A( \gamma( \{ \psi_{A_{\underline{t}}} \}, [{\bf w}] ) ) =
\frac{\pi^k}{k!} \left( \frac{ \sum_{j=1}^k \left| \frac{w_j}{w_{n+1}} \right|^2 }{ 1 + 
\sum_{j=1}^n \left| \frac{w_j}{w_{n+1}} \right|^2 } \right)^k,
\]
which proves the lemma.
\end{proof}

Next, we compute the value of Weinstein's morphism on the element  
\([ \psi_{A_{\underline{t}}} ] \in \pi_{2k-1}(\textup{Ham}(\mathbb{C}P^n, 
\omega_\textup{FS}))\).  
This computation involves evaluating a specific integral, which will be addressed 
in the final section. The result is expressed as a finite sum over multi-indices, 
reflecting the structure of the integral and its symmetry properties.

\begin{prop}
\label{p:2n}
Let \(1 \leq k \leq n\), and let
\(\{ \psi_{A_{\underline{t}}} \}_{\underline{t} \in S^{2k-1}}\)
be the family of Hamiltonian diffeomorphisms of 
\((\mathbb{C}P^n, \omega_\textup{FS})\) defined above. Then, the value of Weinstein’s morphism 
on the homotopy class \([\psi_{A_{\underline{t}}}]\) is given by
 $$
\mathcal{A}([\psi_{A_{\underline{t}}}]) = \frac{\pi^k}{ k! }\cdot
\frac{n!}{(n+k)! \, 2^k}
\sum_{I} \frac{k!}{i_1! \cdots i_{2k}!}
\prod_{j=1}^{2k} (2i_j - 1)!!
 \quad \in \mathbb{R}/\langle \pi^k / k! \rangle,
$$
where the sum runs over all multi-indices \(I = (i_1, \ldots, i_{2k})\) such that \(i_j \geq 0\) and \(i_1 + \cdots + i_{2k} = k\).
\end{prop}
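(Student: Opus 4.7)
I would start from the pointwise formula of Lemma~\ref{l_:laA}. By definition, $\mathcal{A}([\psi_{A_{\underline{t}}}])$ is the average over $(\mathbb{C}P^{n},\omega_{\textup{FS}}^{n}/n!)$ of the quantity $A(\gamma(\{\psi_{A_{\underline{t}}}\},[\mathbf{w}]))$. Since the complement of $j(B^{2n}(1))$ in $\mathbb{C}P^{n}$ is the hyperplane $\mathbb{C}P^{n-1}$, which has symplectic-volume zero, the integral defining $\mathcal{A}$ can be pulled back along $j$ to an integral over $B^{2n}(1)$ against $\omega_{0}^{n}/n!$. Substituting $[\mathbf{w}]=j(\mathbf{z})$ and using \eqref{e:simpemn}, both numerator and denominator of the ratio in Lemma~\ref{l_:laA} carry a common factor $1/(1-|\mathbf{z}|^{2})$ that cancels, yielding the elementary identity
$$\frac{\sum_{j=1}^{k}|w_{j}/w_{n+1}|^{2}}{1+\sum_{j=1}^{n}|w_{j}/w_{n+1}|^{2}}=\sum_{j=1}^{k}|z_{j}|^{2}.$$
Together with $\textup{Vol}(\mathbb{C}P^{n},\omega_{\textup{FS}}^{n}/n!)=\pi^{n}/n!$, this reduces the problem to evaluating $\int_{B^{2n}(1)}(\sum_{j=1}^{k}|z_{j}|^{2})^{k}\,dV$, up to explicit prefactors.

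Next, I would introduce real coordinates $u_{1},\dots,u_{2n}$ on $\mathbb{C}^{n}\simeq\mathbb{R}^{2n}$ with $|z_{j}|^{2}=u_{2j-1}^{2}+u_{2j}^{2}$, so that $\sum_{j=1}^{k}|z_{j}|^{2}=\sum_{l=1}^{2k}u_{l}^{2}$. The multinomial theorem then expands
$$\Bigl(\sum_{l=1}^{2k}u_{l}^{2}\Bigr)^{k}=\sum_{I}\frac{k!}{i_{1}!\cdots i_{2k}!}\,u_{1}^{2i_{1}}\cdots u_{2k}^{2i_{2k}},$$
the sum running over multi-indices $I=(i_{1},\dots,i_{2k})$ with $|I|=k$. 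By linearity, the computation collapses to the evaluation of each monomial integral $\int_{B^{2n}(1)}u_{1}^{2i_{1}}\cdots u_{2k}^{2i_{2k}}\,dV$ (with $i_{l}=0$ for $l>2k$).

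The main computational step is this monomial integral, and I expect it to be the principal obstacle; the author's indication that it ``will be addressed in the final section'' is consistent with this. Passing to spherical coordinates $u_{l}=r\omega_{l}$ separates the integral into the radial piece $\int_{0}^{1}r^{2n+2k-1}\,dr=1/(2n+2k)$ and the surface integral over $S^{2n-1}$ of a product of even-power monomials. The latter is a classical Dirichlet--Beta integral, equal to
$$\frac{2\,\prod_{l=1}^{2k}\Gamma(i_{l}+\tfrac{1}{2})\,\Gamma(\tfrac{1}{2})^{2n-2k}}{\Gamma(n+k)}.$$
Using $\Gamma(i_{l}+\tfrac{1}{2})=\sqrt{\pi}\,(2i_{l}-1)!!/2^{i_{l}}$ and the constraint $\sum i_{l}=k$, which collapses $\prod 2^{i_{l}}$ to the uniform factor $2^{k}$, produces $\pi^{n}\prod_{j}(2i_{j}-1)!!/(2^{k}(n+k)!)$ for each monomial. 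Assembling the volume normalization $n!/\pi^{n}$, the constant $\pi^{k}/k!$ supplied by Lemma~\ref{l_:laA}, and the multinomial coefficients yields precisely the stated formula, read modulo $\mathcal{P}_{2k}(\mathbb{C}P^{n},\omega_{\textup{FS}})=\langle\pi^{k}/k!\rangle$.
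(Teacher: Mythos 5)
Your proposal is correct and follows essentially the same route as the paper: reduce the average to an integral over $B^{2n}(1)$ via the dense embedding $j$ (the hyperplane complement has measure zero), identify the integrand as $\tfrac{\pi^k}{k!}\bigl(\sum_{j=1}^{k}|z_j|^2\bigr)^k$, expand by the multinomial theorem, and evaluate in spherical coordinates with Gamma/Beta functions. The only difference is one of packaging: you invoke the classical Dirichlet formula for monomial integrals over $S^{2n-1}$, whereas the paper derives exactly that evaluation by iterated angular integrals in its Appendix (Lemma~\ref{l:integralall}) and then cites it.
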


\begin{proof}
Fix \( 1 \leq k \leq n \).  
According to the normalization condition on the symplectic form, the volume of  
\( ( \mathbb{C}P^n, \omega_{\textup{FS}})\)  is \( \pi^n/n! \).  
Therefore, the generalized Weinstein morphism takes the form:
\[
\mathcal{A}([\psi_{A_{\underline{t}}}]) = \frac{n!}{\pi^n}
\int_{\mathbb{C}P^n} A\left( \gamma\left( \{\psi_{A_{\underline{t}}}\}, [\mathbf{w}] \right) \right)     
\frac{ \omega_{\textup{FS}}^n }{n!} \in  \mathbb{R}/\langle \pi^k / k! \rangle.
\]
Recall that the symplectic embedding \( j : (B^{2n}(1), \omega_0) \to (\mathbb{C}P^n, \omega_{\textup{FS}}) \)  
has a dense image. Consequently, we may compute the integral over the ball  
\( B^{2n}(1) \) instead of \( \mathbb{C}P^n \). Moreover, we know the value of  
\( A\left(\gamma(A_{\underline{t},1}, \mathbf{z})\right) \) for all \( \mathbf{z} \in B^{2n}(1) \),  
except on a set of measure zero.

Applying Lemma \ref{l:integralall}, which provides the explicit value of the integral,  
the result follows. For,
\begin{eqnarray*}
\mathcal{A}([\psi_{A_{\underline{t}}}])
&=&\frac{n!}{\pi^n}
\int_{  B^{2n}(1)} A (  {\gamma(  A_{{\underline{t}},1} ,  {\bf z}  )}   ) \frac{ \omega_0^n }{n!}\\
&=&\frac{n!}{\pi^{n-k}  k!}
\int_{  B^{2n}(1)}     (|z_1|^2 +\cdots +  |z_k|^2) ^k  \frac{ \omega_0^n }{n!}\\
&=&
\frac{n!}{\pi^{n-k}  k!}
\left(
\frac{\pi^{n}}{ 2^k \ (n+k)! }\sum_I    
\frac{k!}{i_1! \cdots i_{2k}!}
 (2i_1-1) !!  
  \cdots 
 (2i_{2k}-1)!!
 \right)\\
 &=&
 \frac{\pi^k \ n!}{(n+k)!\ k!\  2^k} 
\sum_I    \frac{k!}{i_1! \cdots i_{2k}!}
 (2i_1-1) !!  
  \cdots 
 (2i_{2k}-1)!!.
\end{eqnarray*}
\end{proof}

Next, we use the following lemma to simplify the expression that 
was obtained in the previous proposition.


\begin{lem}
\label{l:va}
$$
 \sum_{\substack{i_1, \dots, i_{2k} \ge 0 \\ i_1+\cdots+i_{2k}=k}} 
\frac{k!}{i_1! \cdots i_{2k}!} \,(2i_1-1)!! \cdots (2i_{2k}-1)!!= 2^k k! \binom{2k-1}{k}
$$
where the sum runs over all multi-indices \( I = (i_1, \ldots, i_{2k}) \) such that \( i_j \geq 0 \) and \( i_1 + \cdots + i_{2k} = k \).
\end{lem}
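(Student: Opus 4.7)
The plan is to convert the multi-index sum into a coefficient-extraction problem for a generating function. The key is the classical identity $(2i-1)!! = (2i)!/(2^i\,i!)$ (valid for $i \geq 0$ with the convention $(-1)!! = 1$), which lets one rewrite
\[
\frac{(2i-1)!!}{i!} = \frac{1}{2^i}\binom{2i}{i}.
\]
Distributing this across the $2k$ factors, the left-hand side of the lemma becomes
\[
\frac{k!}{2^k}\sum_{\substack{i_1+\cdots+i_{2k}=k\\ i_j\geq 0}} \prod_{j=1}^{2k}\binom{2i_j}{i_j}.
\]

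Next I would recall the well-known generating function
\[
\sum_{i\geq 0}\binom{2i}{i} x^i = (1-4x)^{-1/2},
\]
so that raising both sides to the $(2k)$-th power and collecting terms identifies the remaining sum as the coefficient of $x^k$ in $(1-4x)^{-k}$. Applying the generalized binomial theorem,
\[
(1-4x)^{-k} = \sum_{j\geq 0}\binom{k+j-1}{j} 4^j x^j,
\]
gives $[x^k](1-4x)^{-k} = 4^k\binom{2k-1}{k}$.

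Combining the two computations yields
\[
\frac{k!}{2^k}\cdot 4^k \binom{2k-1}{k} \;=\; 2^k\,k!\,\binom{2k-1}{k},
\]
which is the claimed identity. There is no real obstacle here beyond bookkeeping: the only subtlety is verifying the edge case $i_j=0$ in the double-factorial-to-binomial conversion, and checking that the generating function expansion is compatible with the convention $(-1)!!=1$. Once that is in place, the argument is a direct application of the square-root generating function for central binomial coefficients.
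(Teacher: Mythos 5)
Your proof is correct and follows essentially the same route as the paper: the same conversion $(2i-1)!! = (2i)!/(2^i i!)$ to central binomial coefficients, the same generating function $(1-4x)^{-1/2}$, and the same extraction of $[x^k](1-4x)^{-k} = 4^k\binom{2k-1}{k}$. Your explicit attention to the $i_j = 0$ edge case is a minor improvement in rigor over the paper's write-up, which states the conversion only for $i_j \geq 1$.
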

\begin{proof}
Denote by $S_k$ the sum.
 For each $i_j \ge 1$, we have
\[
(2i_j-1)!! = \frac{(2i_j)!}{2^{i_j} i_j!}, 
\]
 Then each summand becomes
\begin{eqnarray*}
\frac{k!}{i_1!\cdots i_{2k}!} \prod_{j=1}^{2k} (2i_j-1)!! 
&=& k! \prod_{j=1}^{2k} \frac{(2i_j)!}{2^{i_j} (i_j!)^2} \\
&=& \frac{k!}{2^k} \prod_{j=1}^{2k} \binom{2i_j}{i_j}
\end{eqnarray*}
since     $i_1+\cdots+i_{2k}=k$. 
Hence,
\[
S_k = \frac{k!}{2^k} \sum_{I} 
\prod_{j=1}^{2k} \binom{2i_j}{i_j}.
\]
Using the generating function
\[
G(x) = \sum_{m\ge 0} \binom{2m}{m} x^m = \frac{1}{\sqrt{1-4x}}
\]
we get that 
\begin{eqnarray*}
\sum_{i_1+\cdots+i_{2k}=k} \prod_{j=1}^{2k} \binom{2i_j}{i_j} 
&=& [x^k] \big(G(x)\big)^{2k} = [x^k] (1-4x)^{-k}\\
&=&4^k \binom{2k-1}{k}
\end{eqnarray*}
and therefore,
\[
S_k = \frac{k!}{2^k} \cdot 4^k \binom{2k-1}{k} = 2^k k! \binom{2k-1}{k}.
\]

\end{proof}


We end this section with the proof that
$ \pi_{2k-1} ( \textup{Ham}(\mathbb{C}P^n,\omega_\textup{FS}) )$ 
is non trivial
for any $1\leq k\leq n$.

\begin{proof}[Proof of Theorem \ref{t:main}]
Let $[\psi_{A_{\underline{t}}}] \in \pi_{2k-1} ( \textup{Ham}(\mathbb{C}P^n,\omega_\textup{FS}) ) $
be the element of Proposition \ref{p:2n}. Then by  Lemma \ref{l:va}, its value under the Weinstein's morphism
takes the form
\begin{eqnarray*}
\mathcal{A}([\psi_{A_{\underline{t}}}])
=\frac{\pi^k}{k!} \cdot
 \frac{ n! \  k!}{(n+k)!\  } 
 \binom{2k-1}{k}  \in \mathbb{R}/\langle \pi^k/k!\rangle
\end{eqnarray*}

Since $k\leq n,$
the factor of $\pi^k/k!$ is a rational number less than 1. Therefore
$[\psi_{A_{\underline{t}}}]\neq 0  $ and $ \pi_{2k-1} (  \textup{Ham}
(\mathbb{C}P^n,\omega_\textup{FS}) )$ is nontrivial.
\end{proof}

Now that we have computed 
$\mathcal{A}$ on
$[\psi_{A_{\underline{t}}}]  \in \pi_{2k-1}\!\left(\mathrm{Ham}
 (\mathbb{C}P^n, \omega_{\mathrm{FS}})\right)$,
we are in a position to evaluate Weinstein's morphism
on groups of the form
$$
\pi_{2k-1}\!\left(\mathrm{Ham}(\mathbb{C}P^n\times M, \, 
\omega_{\mathrm{FS}}\oplus \omega)\right).
$$

\medskip

\begin{proof}[Proof of Theorem~\ref{t:prod}]
Let 
$
[\psi_{\underline{t}}] \in \pi_{2k-1}\!\left(\mathrm{Ham}(\mathbb{C}P^n, \omega_{\mathrm{FS}})\right)
$
be the nontrivial element given in Proposition~\ref{p:2n} and $(M,\omega)$ a closed symplectic manifold
such that $\pi_{2k-1}(M)$ is trivial.

It follows by Proposition \ref{p:product}, that
\begin{eqnarray*}
\mathcal{A}^{\mathbb{C}P^n \times M }[\psi_{\underline{t}} \times 1_M]
&=&\big[\,    
\mathcal{A}^{\mathbb{C}P^n  }[\psi_{\underline{t}}]
   \,\big]  \\
   &=&\left[\,     \frac{\pi^k}{k!}  \cdot q \,\right] 
   \in \mathcal{P}_{2k}(\mathbb{C}P^n\times M, \, 
\omega_{\mathrm{FS}}\oplus \omega).
\end{eqnarray*}
for some $q\in \mathbb{Q}$ such that $0<q<1.$

Note that
$\mathcal{P}_{2k}(\mathbb{C}P^n\times M, \, 
\omega_{\mathrm{FS}}\oplus \omega)$ 
is  generated by
\begin{eqnarray*}
\left\{
\frac{\pi^k}{k!},\;
\frac{\pi^{k-1}}{(k-1)!}\cdot 
\mathcal{P}_{2}(M, \omega),\;\ldots,\;
\pi\cdot \mathcal{P}_{2k-1}(M, \omega),\;
\mathcal{P}_{2k}(M, \omega)
\right\}.
\end{eqnarray*}
Since $(M,\omega)$ is such that all $\mathcal{P}_{2j}(M,\omega)$ are contained in $\mathbb{Q}$,
it follows  that $[\psi_{\underline{t}} \times 1_M]$ is nontrivial in 
$ \pi_{2k-1}\!\left(\mathrm{Ham}(\mathbb{C}P^n\times M, \, 
\omega_{\mathrm{FS}}\oplus \omega)\right).$
\end{proof}

\section{Application: $\pi_{2k-1}(\textup{Ham}  (\widetilde{\mathbb{C}P}\,^n,\widetilde\omega_\rho)   )$  for
$1\leq k\leq n$}
\label{hamblowdiff}

In this section, we prove the main result of the paper. Namely, for any 
\(1 \leq k \leq n\), the homotopy group
$
\pi_{2k-1}(\textup{Ham}(\widetilde{\mathbb{C}P}\,^n, \widetilde{\omega}_\rho))
$
contains an infinite cyclic subgroup, which arises from a family of
 Hamiltonian diffeomorphisms on \((\mathbb{C}P^n, \omega_{\textup{FS}})\). 
This is achieved by lifting the previously constructed families to the 
symplectic blow-up \((\widetilde{\mathbb{C}P}\,^n, \widetilde{\omega}_\rho)\), 
and by analyzing their behavior under the generalized Weinstein morphism.

Recall that the symplectic one-point blow-up of weight \(\rho\) is constructed by 
symplectically embedding the closed ball \((\overline{B^{2n}}(\rho), \omega_0)\) 
into \((M, \omega)\), and collapsing the image of its boundary via the Hopf fibration. 
Importantly, this symplectic embedding extends to a neighborhood of the closed ball.
Fix a symplectic embedding 
$
\iota\colon  (\overline{B^{2n}}(\rho), \omega_0) \to (M, \omega).
$
We say that a Hamiltonian diffeomorphism \(\psi\) is {\em $\iota$-unitary} if,
\(\psi  (   \iota (\overline{B^{2n}}(\rho)) ) =\iota (\overline{B^{2n}}(\rho))    \)
and on a 
neighborhood of \(\overline{B^{2n}}(\rho)\), the map \(\iota^{-1} \circ \psi \circ \iota\) 
is given by the action of a unitary matrix.
In this case, \(\psi\) induces a well-defined Hamiltonian diffeomorphism 
\(\widetilde{\psi}\) on the symplectic blow-up \((\widetilde{M}, 
\widetilde{\omega}_\rho)\). Note that an $\iota$-unitary Hamiltonian
fixes the point \(\iota(0)\). This class of Hamiltonian diffeomorphisms was previously 
studied by the author in \cite{pea-rankham}.

Next, suppose that \([ \{\psi_{\underline{t}}\} ]\) is a class in \(\pi_{2k-1}(
\textup{Ham}(M, \omega))\) such that each \(\psi_{\underline{t}}\) is 
$\iota$-unitary. Then, for any point \(p \in \iota(\overline{B^{2n}}(\rho))\), 
its trace 
$ \gamma( \{\psi_{\underline{t}}\} _{{\underline{t}}\in S^{2k-1}}   ,p   )$,
is entirely contained in \(\iota(\overline{B^{2n}}(\rho))\). That is, the entire image 
of the trace lies within the image of the embedded ball.

Thus, fix \(k \in \{1, \ldots, n\}\) and assume that \(\pi_{2k-1}(M) = 0\). It follows 
that \(\pi_{2k-1}(\widetilde{M}) = 0\) as well, and hence the generalized 
Weinstein morphism is defined on both groups:
$
\pi_{2k-1}(\textup{Ham}(M,\omega))$ and $ \pi_{2k-1}(\textup{Ham}(\widetilde{M},
 \widetilde{\omega}_\rho)).
$
In this setting, the relation between the period groups of the symplectic 
manifolds is given by:
\[
\mathcal{P}_{2k}(\widetilde{M}, \widetilde{\omega}_\rho) = \mathcal{P}_{2k}
(M, \omega) + \mathbb{Z} \left\langle {\pi^k \rho^{2k}}/{k!} \right\rangle.
\]
Let \([ \{\psi_{\underline{t}}\} ] \in \pi_{2k-1}(\textup{Ham}(M, \omega))\) 
be such that each \(\psi_{\underline{t}}\) is \(\iota\)-unitary. Then the 
induced family \(\{ \widetilde{\psi}_{\underline{t}} \}\) defines a class
$
[ \{\widetilde{\psi}_{\underline{t}} \} ] \in \pi_{2k-1}(\textup{Ham}
(\widetilde{M}, \widetilde{\omega}_\rho)).
$
We now proceed to compute \(\mathcal{A}([ \widetilde{\psi}_{\underline{t}} ])\) 
in terms of the original manifold \((M, \omega)\) and the class
\([ \{\psi_{\underline{t}}\} ]\).  
This result is analogous to the one established in \cite{pea-rankham} for the 
case \(k = 1\), where the computation is carried out using the Hamiltonian 
function directly.

\begin{prop}
\label{p:relationW}
Let \([ \{\psi_{\underline{t}} \} ] \in \pi_{2k-1}(\textup{Ham}(M, \omega))\) be a class such that each \(\psi_{\underline{t}}\) is \(\iota\)-unitary, and let \([ \{\widetilde{\psi}_{\underline{t}} \} ] \in \pi_{2k-1}(\textup{Ham}(\widetilde{M}, \widetilde{\omega}_\rho))\) be the induced class on the blow-up. Then,
\begin{eqnarray*}
\mathcal{A}[\widetilde \psi_{\underline t}]   &=&\left[ 
\frac{1}{\textup{Vol}(\widetilde M,\widetilde\omega_\rho^n/n!)} 
\left(\ \  \int_{M} 
A (    \gamma( \{\psi_{\underline{t}}\} _{{\underline{t}}\in S^{2k-1}}   ,p   )  )
 \frac{ \omega^n }{n!}  - \right.\right. \\
& &\left.\left.   
 \int_{\iota(B^{2n}(\rho))} 
A (    \gamma( \{\psi_{\underline{t}}\} _{{\underline{t}}\in S^{2k-1}}   ,p   )  )
 \frac{ \omega^n }{n!}\ \
\right)\right]   \in \mathbb{R}/
\mathcal{P}_{2k}
(\widetilde M,\widetilde\omega_\rho).
\end{eqnarray*}
\end{prop}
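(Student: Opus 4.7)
The plan is to unfold the definition of $\mathcal{A}[\widetilde\psi_{\underline t}]$ on $(\widetilde M, \widetilde\omega_\rho)$ and then transport the resulting integral back to $(M,\omega)$ via the symplectic identification $\widetilde M \setminus E \cong M \setminus \iota(\overline{B^{2n}}(\rho))$ provided by the blow-up construction. Under this identification $\widetilde\omega_\rho$ restricts to $\omega$, and each lifted diffeomorphism $\widetilde\psi_{\underline t}$ restricts to $\psi_{\underline t}$. By the very definition of the generalized Weinstein morphism,
\[
\mathcal{A}[\widetilde\psi_{\underline t}]\cdot \textup{Vol}(\widetilde M,\widetilde\omega_\rho^n/n!) = \int_{\widetilde M} A\bigl(\gamma(\{\widetilde\psi_{\underline t}\},\widetilde p)\bigr)\,\frac{\widetilde\omega_\rho^n}{n!}.
\]
Since the exceptional divisor $E$ has real codimension two, it carries zero $\widetilde\omega_\rho^n$-measure, so the integral over $\widetilde M$ equals the same integral over $\widetilde M \setminus E$, which, after transport, becomes an integral over $M \setminus \iota(\overline{B^{2n}}(\rho))$ taken with respect to $\omega^n/n!$.

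The decisive analytic step is to establish, for each $p \in M\setminus \iota(\overline{B^{2n}}(\rho))$ corresponding to $\widetilde p \in \widetilde M \setminus E$, the congruence
\[
A\bigl(\gamma(\widetilde\psi_{\underline t},\widetilde p)\bigr) \equiv A\bigl(\gamma(\psi_{\underline t},p)\bigr) \pmod{\mathcal{P}_{2k}(\widetilde M, \widetilde\omega_\rho)}.
\]
Starting with a capping disk $u\colon D^{2k}\to M$ for $\gamma(\psi_{\underline t},p)$ (which exists since $\pi_{2k-1}(M) = 0$), the case in which $u$ lies entirely in $M\setminus \iota(\overline{B^{2n}}(\rho))$ is immediate: such a $u$ tautologically defines a capping disk in $\widetilde M\setminus E$ with identical symplectic area. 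Otherwise, cut out the portion of $u$ inside the embedded ball and paste in a capping disk in the blown-up region, possibly traversing $E$; the difference between the two choices is a $2k$-cycle in $\widetilde M$, and its $\widetilde\omega_\rho^k$-period lies in $\mathcal{P}_{2k}(\widetilde M, \widetilde\omega_\rho) = \mathcal{P}_{2k}(M,\omega) + \mathbb{Z}\langle \pi^k\rho^{2k}/k!\rangle$, since the extra generator encodes exactly the pairing of $\widetilde\omega_\rho^k$ with the class supported on $E$.

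Applying this congruence pointwise under the integral sign gives
\[
\int_{\widetilde M} A\bigl(\gamma(\widetilde\psi,\widetilde p)\bigr)\frac{\widetilde\omega_\rho^n}{n!} \equiv \int_{M\setminus \iota(\overline{B^{2n}}(\rho))} A\bigl(\gamma(\psi,p)\bigr)\frac{\omega^n}{n!} \pmod{\textup{Vol}(\widetilde M,\widetilde\omega_\rho^n/n!)\cdot \mathcal{P}_{2k}(\widetilde M,\widetilde\omega_\rho)}.
\]
Rewriting the right-hand side as $\int_M - \int_{\iota(B^{2n}(\rho))}$ (the boundary sphere has measure zero) and dividing by $\textup{Vol}(\widetilde M,\widetilde\omega_\rho^n/n!)$ produces the stated formula. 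The main obstacle I expect is this second step: precisely for $k = n$ the group $\pi_{2n-1}\bigl(M\setminus \iota(\overline{B^{2n}}(\rho))\bigr)$ typically fails to vanish even when $\pi_{2n-1}(M) = 0$, so capping disks in $M$ may be forced to meet the ball, and one must verify carefully that every such unavoidable cap-replacement across $E$ contributes an integer multiple of $\pi^k\rho^{2k}/k!$ and is therefore absorbed by $\mathcal{P}_{2k}(\widetilde M,\widetilde\omega_\rho)$.
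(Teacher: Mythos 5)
Your overall skeleton is the same as the paper's: discard the exceptional divisor $E$ (it has measure zero), identify $(\widetilde M\setminus E,\widetilde\omega_\rho)$ with $(M\setminus\iota(\overline{B^{2n}}(\rho)),\omega)$ so that lifted traces match original traces, and rewrite the resulting integral as $\int_M-\int_{\iota(B^{2n}(\rho))}$. The divergence --- and the gap --- is in your ``decisive analytic step.'' Setting aside the vagueness of the cut-and-paste (the portion of $u$ inside the ball is not a disk, and the ``difference of the two choices'' is not a cycle in $\widetilde M$ until you transport the $M$-cap into $\widetilde M$, which is exactly what may be impossible; the clean route is to blow the $\widetilde M$-cap \emph{down} to $M$), what you establish is only a pointwise congruence
\[
A\big(\gamma(\widetilde\psi_{\underline t},\widetilde p)\big)=A\big(\gamma(\psi_{\underline t},p)\big)+c(p),
\qquad c(p)\in\mathcal P_{2k}(\widetilde M,\widetilde\omega_\rho),
\]
where $c(p)$ depends on $p$ and on the caps chosen. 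Integrating this congruence is a non sequitur: $\mathcal P_{2k}(\widetilde M,\widetilde\omega_\rho)$ is a countable (in the relevant examples dense) subgroup of $\mathbb R$, and the volume average of a non-constant $\mathcal P$-valued function need not lie in $\mathcal P$. Concretely, if $c=0$ on half the volume and $c=\pi^k\rho^{2k}$ on the other half, the average is $\pi^k\rho^{2k}/2$, which for $k=1$ and $\rho$ transcendental does not belong to $\mathcal P_{2}(M,\omega)+\mathbb Z\langle\pi\rho^{2}\rangle$ when $\mathcal P_2(M,\omega)=\mathbb Z\langle\pi\rangle$. So the asserted congruence modulo $\textup{Vol}(\widetilde M,\widetilde\omega_\rho^n/n!)\cdot\mathcal P_{2k}(\widetilde M,\widetilde\omega_\rho)$ simply does not follow from the pointwise one unless you prove that $c(p)$ is constant.

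The paper avoids this issue by proving a stronger pointwise statement: for $p$ outside the ball it chooses a cap that avoids $\iota(\overline{B^{2n}}(\rho))$ altogether; such a cap lifts to $\widetilde M\setminus E$ with \emph{equal} area, so the two real-valued integrands coincide and there is nothing to average. (Your closing worry is legitimate and in fact bites the paper's own argument: for $k=n$ and $M=\mathbb{C}P^n$ the trace of a generic exterior point is a nonzero multiple of the Hopf class in $\pi_{2n-1}\big(M\setminus\iota(\overline{B^{2n}}(\rho))\big)\simeq\pi_{2n-1}(\mathbb{C}P^{n-1})$, so ball-avoiding caps do not exist there; for $k<n$ they do, by general position. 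Your congruence idea is the natural way to treat $k=n$, but as written it trades that problem for the averaging problem.) To close your gap you would need to show the correction is constant: for instance, choose the caps continuously in $p$ over the connected set $M\setminus\iota(\overline{B^{2n}}(\rho))$, so that $c(p)$ is a continuous function with values in a countable subgroup of $\mathbb R$, hence constant; only then does dividing by $\textup{Vol}(\widetilde M,\widetilde\omega_\rho^n/n!)$ yield the stated formula. As it stands, the proof is incomplete at its central step.
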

\begin{proof}
Each Hamiltonian \(\psi_{\underline{t}}\) is \(\iota\)-unitary, and thus it maps the 
ball \(\iota(\overline{B^{2n}}(\rho))\) to itself. Therefore, for any point 
\(p \in \iota(\overline{B^{2n}}(\rho))\), there exists a map \(D^{2k} \to M\) whose 
boundary is mapped to the trace  
$
\gamma(\{\psi_{\underline{t}}\}_{\underline{t} \in S^{2k-1}}, p),
$
and whose image lies entirely within \(\iota(\overline{B^{2n}}(\rho))\).
Likewise, for any point \(p \in M \setminus \iota(\overline{B^{2n}}(\rho))\), there 
exists a map \(D^{2k} \to M\) that maps the boundary to the same type of trace 
and whose image avoids the region \(\iota(\overline{B^{2n}}(\rho))\). In this case, 
the map \(D^{2k} \to M\) lifts to a map \(D^{2k} \to \widetilde{M}\), whose boundary maps to  
$
\gamma(\{\widetilde{\psi}_{\underline{t}}\}_{\underline{t} \in S^{2k-1}}, p).
$

Henceforth, by avoiding the points in the exceptional divisor 
 $E\subset \widetilde M,$ we have
\begin{eqnarray*}
\int_{\widetilde M} 
A (    \gamma( \{\widetilde \psi_{\underline{t}}\} _{{\underline{t}}\in S^{2k-1}}   ,p   )  )
 \frac{ \widetilde\omega^n_\rho }{n!}
&=&
\int_{\widetilde M\setminus E} 
A (    \gamma( \{\widetilde \psi_{\underline{t}}\} _{{\underline{t}}\in S^{2k-1}}   ,p   )  )
 \frac{ \widetilde\omega^n_\rho }{n!}\\
&=&
\int_{M\setminus \iota (\overline{B^{2n}}(\rho))} 
A (    \gamma( \{\psi_{\underline{t}}\} _{{\underline{t}}\in S^{2k-1}}   ,p   )  )
 \frac{ \omega^n }{n!},
\end{eqnarray*}
and the claim follows.
\end{proof}

Once again, consider \(({\mathbb{C}P}^n, \omega_\textup{FS})\), where the 
symplectic form is normalized as before, and assume \(n > 1\). Fix \(k \in \{1, 
\ldots, n\}\), and consider the element in  
\(\pi_{2k-1}(\textup{Ham}({\mathbb{C}P}^n, \omega_\textup{FS}))\) represented
 by the family of Hamiltonian diffeomorphisms \(\{\psi_{\underline{t}}\}\) induced 
by the unitary matrices described in Lemma \ref{l:matrix}.

Fix \(0 < \rho < 1\), and restrict the symplectic embedding  
\(j \colon (B^{2n}(1), \omega_0) \to (\mathbb{C}P^n, \omega_\textup{FS})\),
 defined in \eqref{e:simpemn}, to the closed ball \(\overline{B^{2n}}(\rho)\). 
This restriction gives rise to the one-point symplectic blow-up  
\((\widetilde{\mathbb{C}P}\,^n, \widetilde{\omega}_\rho)\) of weight \(\rho\).  
By construction, the Hamiltonian diffeomorphisms \(\psi_{\underline{t}}\) are 
\(j\)-unitary, and thus induce a class  
\([ \widetilde{\psi}_{\underline{t}} ] \in \pi_{2k-1}(\textup{Ham}(
\widetilde{\mathbb{C}P}\,^n, \widetilde{\omega}_\rho))\).
We proceed to compute \(\mathcal{A}([\widetilde{\psi}_{\underline{t}}])\) 
using the formula established in Proposition \ref{p:relationW}. Finally, we will 
show that this class has infinite order, thereby proving the main result of the paper.

\begin{prop}
\label{p:ainblow}
Let \(1 \leq k \leq n\), and let
\(\{   \widetilde \psi_{\underline t}   \}_{\underline{t} \in S^{2k-1}}\)
be the family of Hamiltonian diffeomorphisms of  
$(\widetilde{\mathbb{C}P}\,^n, \widetilde{\omega}_\rho)$
 defined above. Then, the value 
of Weinstein’s morphism on the homotopy class \([\widetilde \psi_{\underline t}] \) is given by

\begin{eqnarray*}
\mathcal{A}[\widetilde \psi_{\underline t}]   &=&\left[ 
\frac{n!\pi^k}{( 1-\rho^{2n}) k!}
\cdot
 \frac{1- \ \rho^{2(n+k)}   }{ 2^k \ (n+k)! }
\cdot
\sum_I    
\frac{k!}{i_1!\cdots i_{2k}!}  
\prod_{j=1}^{2k} (2i_j - 1)!!
\right]
\end{eqnarray*}
in $\mathbb{R}/\langle \pi^k/k!, \pi^k\rho^{2k} /k!\rangle$.
\end{prop}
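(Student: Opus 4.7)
\textbf{Proof proposal for Proposition~\ref{p:ainblow}.}
The plan is to apply Proposition~\ref{p:relationW} with $(M,\omega)=(\mathbb{C}P^n,\omega_{\textup{FS}})$ and $\iota=j$ (the embedding given by \eqref{e:simpemn} restricted to $\overline{B^{2n}}(\rho)$), and then to compute each of the three ingredients: the volume of the blow-up, the integral over all of $\mathbb{C}P^n$, and the integral over $\iota(B^{2n}(\rho))$.

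First, I would record the volume of the blow-up. Since the symplectic blow-up of weight $\rho$ replaces a ball of symplectic volume $\pi^n\rho^{2n}/n!$ by an exceptional divisor of lower dimension, one has
\[
\textup{Vol}(\widetilde{\mathbb{C}P}\,^n,\widetilde{\omega}_\rho^n/n!) \;=\; \frac{\pi^n(1-\rho^{2n})}{n!}.
\]
Next, the integral over $\mathbb{C}P^n$ is exactly the numerator appearing in the computation of $\mathcal{A}[\psi_{A_{\underline t}}]$ carried out in the proof of Proposition~\ref{p:2n}; I would simply reuse that value, namely
\[
\int_{\mathbb{C}P^n} A\bigl(\gamma(\{\psi_{A_{\underline t}}\},[\mathbf{w}])\bigr)\,\frac{\omega_{\textup{FS}}^n}{n!}
\;=\; \frac{\pi^{n+k}}{k!\,2^k\,(n+k)!}\,\sum_I \frac{k!}{i_1!\cdots i_{2k}!}\prod_{j=1}^{2k}(2i_j-1)!!.
\]

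The only genuinely new calculation is the integral over $\iota(B^{2n}(\rho))$. Since $j$ is $\iota$-equivariant and the trace of $[\mathbf{w}]$ under $\{\psi_{A_{\underline t}}\}$ lies inside $j(B^{2n}(1))$, I would transfer the integral to the flat ball and use Lemma~\ref{l_:laA} in the intermediate form $A(\gamma(A_{\underline t,1},\mathbf{z}))=\tfrac{\pi^k}{k!}(|z_1|^2+\cdots+|z_k|^2)^k$, giving
\[
\int_{j(B^{2n}(\rho))} A\bigl(\gamma(\{\psi_{A_{\underline t}}\},[\mathbf{w}])\bigr)\,\frac{\omega_{\textup{FS}}^n}{n!}
\;=\; \frac{\pi^k}{k!}\int_{B^{2n}(\rho)}(|z_1|^2+\cdots+|z_k|^2)^k\,\frac{\omega_0^n}{n!}.
\]
The key observation is the scaling identity $z\mapsto \rho w$, under which $\omega_0^n/n!$ picks up a factor $\rho^{2n}$ and $(|z_1|^2+\cdots+|z_k|^2)^k$ picks up a factor $\rho^{2k}$, so the integral equals $\rho^{2(n+k)}$ times the corresponding integral over $B^{2n}(1)$, whose value is provided by Lemma~\ref{l:integralall} (the same lemma invoked in the proof of Proposition~\ref{p:2n}).

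Finally, inserting these three computations into the formula of Proposition~\ref{p:relationW} and factoring out $1-\rho^{2(n+k)}$ yields
\[
\mathcal{A}[\widetilde\psi_{\underline t}]
\;=\;\left[\,\frac{n!}{\pi^n(1-\rho^{2n})}\cdot \frac{\pi^{n+k}(1-\rho^{2(n+k)})}{k!\,2^k\,(n+k)!}\,\sum_I\frac{k!}{i_1!\cdots i_{2k}!}\prod_{j=1}^{2k}(2i_j-1)!!\,\right]
\]
in $\mathbb{R}/\mathcal{P}_{2k}(\widetilde{\mathbb{C}P}\,^n,\widetilde\omega_\rho) = \mathbb{R}/\langle \pi^k/k!,\,\pi^k\rho^{2k}/k!\rangle$, which after simplifying $\pi^{n+k}/\pi^n=\pi^k$ is exactly the claimed expression. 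The only step that requires any care is keeping the $\iota$-equivariance straight when switching between $\mathbb{C}P^n$ and $B^{2n}(1)$ so that one may legitimately reuse Lemma~\ref{l:integralall} with the upper radius $\rho$ in place of $1$; the scaling argument makes this essentially automatic, so no genuine obstacle arises.
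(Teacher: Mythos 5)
Your proposal is correct and follows essentially the same route as the paper: apply Proposition~\ref{p:relationW} with the blow-up volume $\pi^n(1-\rho^{2n})/n!$, evaluate both integrals by transferring to the flat ball and invoking Lemma~\ref{l:integralall}, and combine. The only cosmetic difference is your scaling substitution $z\mapsto\rho w$ to reduce to radius $1$, which is unnecessary since Lemma~\ref{l:integralall} is already stated for arbitrary radius $r_0$ and the paper simply applies it with $r_0=\rho$.
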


\begin{proof}
From the definition of the symplectic one-point blow-up, we get that
$
\textup{Vol}(\widetilde{\mathbb{C}P}\,^n,\widetilde\omega_\rho^n/n!)$ is equal to  ${\pi^n(1 - \rho^{2n})}/{n!}.
$

Next, we apply Lemma \ref{l:integralall} twice to evaluate the integrals involved in Proposition \ref{p:relationW}.
First, we compute the full integral over \(\mathbb{C}P^n\):
\begin{align*}
\int_{\mathbb{C}P^n} 
A\left( \gamma( \{\psi_{\underline{t}}\} _{{\underline{t}}\in S^{2k-1}}, p ) \right) 
\frac{ \omega_\textup{FS}^n }{n!}
&= \int_{B^{2n}(1)} 
A\left( \gamma( A_{{\underline t}, 1}, {\bf z} ) \right) 
\frac{ \omega_0^n }{n!} \\
&= \frac{\pi^n}{2^k (n+k)!} \cdot \frac{\pi^k}{k!} \sum_I    
\frac{k!}{i_1!\cdots i_{2k}!}  
\prod_{j=1}^{2k} (2i_j - 1)!!.
\end{align*}

Similarly, the integral over the image of the embedded ball becomes:
\begin{align*}
\int_{\iota(B^{2n}(\rho))} 
A\left( \gamma( \{\psi_{\underline{t}}\} _{{\underline{t}}\in S^{2k-1}}, p ) \right) 
\frac{ \omega_\textup{FS}^n }{n!}
&= \int_{ B^{2n}(\rho) } 
A\left( \gamma( A_{{\underline t}, 1}, {\bf z} ) \right) 
\frac{ \omega_0^n }{n!} \\
&= \frac{\pi^n \rho^{2(n+k)}}{2^k (n+k)!} \cdot \frac{\pi^k}{k!} 
\sum_I    
\frac{k!}{i_1!\cdots i_{2k}!}  
\prod_{j=1}^{2k} (2i_j - 1)!!.
\end{align*}

Putting these expressions together in Proposition \ref{p:relationW}, we get
\begin{align*}
\mathcal{A}[\widetilde \psi_{\underline t}] 
&= \left[
\frac{n! \pi^k}{(1 - \rho^{2n}) k!} \cdot 
\frac{1 - \rho^{2(n+k)}}{2^k (n+k)!} \cdot 
\sum_I    
\frac{k!}{i_1!\cdots i_{2k}!}  
\prod_{j=1}^{2k} (2i_j - 1)!!
\right]
\end{align*}
in \(\mathbb{R}/\langle \pi^k/k!, \pi^k \rho^{2k}/k! \rangle\).
\end{proof}

\begin{proof}[Proof of Theorem \ref{t:realmain}]
From Proposition \ref{p:ainblow}, it suffices to show that the quantity
\[
\frac{n!}{1 - \rho^{2n}} 
\cdot
\frac{1 - \rho^{2(n+k)}}{2^k (n+k)!}
\cdot
\sum_I    
\frac{k!}{i_1! \cdots i_{2k}!}  
\prod_{j=1}^{2k} (2i_j - 1)!!
\]
is not of the form \(A + B \rho^{2k}\) for some integers \(A, B \in \mathbb{Z}\).
This would amount to expressing the quantity as a polynomial in \(\rho\) with rational 
coefficients and setting it equal to zero. 

However, since \(\rho\) is transcendental, such a relation cannot hold. Therefore, we conclude that
$
[\widetilde \psi_{\underline t}] \in 
\pi_{2k-1}(\textup{Ham}(\widetilde{\mathbb{C}P}\,^n, \widetilde{\omega}_\rho))
$
has infinite order.
\end{proof}

\section{Appendix}

For completeness, we provide a brief sketch of the calculation of the following integral,
which played a central role in the previous sections in the evaluation of Weinstein’s morphism.

\begin{lem}
\label{l:integralall}
Let \( 1 \leq l \leq n \), \( k \) be a positive integer, and \( r_0 > 0 \). We have the following identity:
\begin{eqnarray}
\label{e:integralK}
\int_{B^{2n}(r_0)}    ( |z_1|^2 \!\!\!\! & + &\!\!\!\!  \cdots  +|z_l|^2)^k  \ \ \frac{\omega_0^n}{n!} =\\
\nonumber & &
 \frac{\pi^{n} \ r_0^{2(n+k)}   }{ 2^k \ (n+k)! }\sum_I    
\frac{k!}{i_1!\cdots i_{2l}!}  
\prod_{j=1}^{2l} (2i_j - 1)!!
\end{eqnarray}
where the sum runs over all multi-indices \( I = (i_1, \ldots, i_{2l}) \) such that \( i_j \geq 0 \) and \( i_1 + \cdots + i_{2l} = k \).
\end{lem}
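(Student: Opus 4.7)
The plan is to reduce the integral to a classical moment computation on the Euclidean ball and then unwind the gamma-function values in terms of double factorials. The identification $\omega_0^n/n! = du_1\wedge\cdots\wedge du_{2n}$ (where $u_{2j-1}=\mathrm{Re}\,z_j$, $u_{2j}=\mathrm{Im}\,z_j$) turns the left-hand side into a Euclidean integral on $B^{2n}(r_0)\subset \mathbb{R}^{2n}$ with integrand
\[
(u_1^2+\cdots+u_{2l}^2)^{k}.
\]
The multi-index structure in the right-hand side is the one that appears naturally in the multinomial expansion of $(u_1^{2}+\cdots+u_{2l}^{2})^{k}$, which strongly suggests the road to take.

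First, I would expand by the multinomial theorem,
\[
(u_1^2+\cdots+u_{2l}^2)^{k}
=\sum_{I}\frac{k!}{i_1!\cdots i_{2l}!}\,u_1^{2i_1}\cdots u_{2l}^{2i_{2l}},
\]
and swap sum and integral, reducing the problem to evaluating the monomial moments
\[
M(I,r_0):=\int_{B^{2n}(r_0)} u_1^{2i_1}\cdots u_{2l}^{2i_{2l}}\,du_1\cdots du_{2n}.
\]
A rescaling $u=r_0 v$ immediately gives $M(I,r_0)=r_0^{2n+2k}\,M(I,1)$, accounting for the factor $r_0^{2(n+k)}$ in the statement.

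Next, I would evaluate $M(I,1)$ by the standard moment formula on the unit ball. Two equivalent ways: either pass to polar coordinates $u=r\omega$ with $\omega\in S^{2n-1}$, separating the radial integral $\int_0^1 r^{2n-1+2k}dr=\frac{1}{2(n+k)}$ from the spherical moment; or use the Gaussian trick
\[
\int_{\mathbb{R}^{2n}}u_1^{2i_1}\cdots u_{2l}^{2i_{2l}}e^{-|u|^2}du=\pi^{n-l}\prod_{j=1}^{2l}\Gamma\!\left(i_j+\tfrac{1}{2}\right),
\]
combined with polar conversion to extract the spherical part. Either approach yields
\[
M(I,1)=\frac{\pi^{n-l}\prod_{j=1}^{2l}\Gamma(i_j+\tfrac{1}{2})}{\Gamma(n+k+1)},
\]
where the $(2n-2l)$ trivial $\Gamma(1/2)=\sqrt{\pi}$ factors for the indices $j>2l$ produce the $\pi^{n-l}$, and the denominator comes from the identity $(n+k)\Gamma(n+k)=\Gamma(n+k+1)$.

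Finally, I would convert the gamma values using $\Gamma(i_j+\tfrac{1}{2})=\frac{(2i_j-1)!!}{2^{i_j}}\sqrt{\pi}$. Since $\sum i_j=k$, this contributes $\pi^l/2^k$ and replaces each $\Gamma(i_j+1/2)$ by $(2i_j-1)!!$, combining with the $\pi^{n-l}$ to give the $\pi^n/(2^k(n+k)!)$ prefactor of the claim. Re-summing over $I$ with the multinomial weights yields exactly the right-hand side of \eqref{e:integralK}. I do not expect a real obstacle here: the argument is mainly bookkeeping with gamma/double factorial identities, and the only thing worth checking carefully is that the $j>2l$ indices (where $i_j=0$) contribute the correct $\sqrt{\pi}$ factors and do not disturb the shape of the sum.
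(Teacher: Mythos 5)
Your proof is correct, and it reaches the identity by a cleaner route than the paper does. Both arguments begin the same way, with the multinomial expansion of $(u_1^2+\cdots+u_{2l}^2)^k$ and the observation that $\omega_0^n/n!$ is Lebesgue measure, so everything reduces to the monomial moments $\int_{B^{2n}(1)}u_1^{2i_1}\cdots u_{2l}^{2i_{2l}}\,du$. The difference is in how those moments are evaluated. The paper carries out an explicit hyperspherical-coordinate computation: it multiplies the expanded integrand by the Jacobian $r^{2n-1}\sin^{2n-2}\theta_1\cdots\sin\theta_{2n-2}$, integrates each angular variable as a Beta function, rewrites everything in Gamma functions, and lets a long telescoping cancellation produce
\[
\frac{\pi^{n-l}}{(n+k)!}\prod_{j=1}^{2l}\Gamma\!\left(i_j+\tfrac12\right),
\]
and it must do this twice, treating $l<n$ and $l=n$ as separate cases because the angular bookkeeping differs. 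You instead invoke (or derive via the Gaussian factorization
$\int_{\mathbb{R}^{2n}}u_1^{2i_1}\cdots u_{2l}^{2i_{2l}}e^{-|u|^2}du=\pi^{n-l}\prod_{j}\Gamma(i_j+\tfrac12)$
together with the radial integral $\int_0^1 r^{2(n+k)-1}dr=\tfrac{1}{2(n+k)}$) the standard moment formula on the ball, which yields the same Gamma-product expression uniformly in $l$, with no case split and no telescoping of Beta functions. The final conversion $\Gamma(i_j+\tfrac12)=\sqrt{\pi}\,(2i_j-1)!!/2^{i_j}$ and the scaling factor $r_0^{2(n+k)}$ are handled identically in both arguments, and your accounting of the $\pi^{n-l}\cdot\pi^l/2^k$ prefactor and of the trivial indices with $i_j=0$ is exactly right. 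In short: same skeleton, but your moment evaluation buys brevity and uniformity, while the paper's buys self-containedness at the cost of two pages of spherical-coordinate computation.
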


We compute the integral using the spherical coordinate system in 
\( \mathbb{R}^{2n} \). To this end, let \( (r, \theta_1, \ldots, \theta_{2n-1}) \) denote the
 spherical coordinate system, where \( r \) is the radial distance and
  \( \theta_1, \theta_2, \ldots, \theta_{2n-1} \) are the angular coordinates. 
The relations between the spherical and Cartesian coordinates are given by:
\begin{eqnarray*}
x_1&=&r\cos\theta_1\\
x_2&=&r\sin\theta_1 \cos\theta_2\\
....& & ....\\
x_{2n-2}&=&r\sin\theta_1 \sin\theta_2    \cdots \sin\theta_{2n-3}\cos\theta_{2n-2}\\
x_{2n-1}&=&r\sin\theta_1 \sin\theta_2    \cdots \sin\theta_{2n-3}\sin\theta_{2n-2} \cos\theta_{2n-1}\\
x_{2n}&=&r\sin\theta_1 \sin\theta_2    \cdots \sin\theta_{2n-3}\sin\theta_{2n-2} \sin\theta_{2n-1}.
\end{eqnarray*}

To compute the integral in (\ref{e:integralK}), we consider two cases: 
 $l<n$ and $l=n$.
 In the case when $l<n$, we have the following setup for the integral,
\begin{eqnarray*}
(x_1^2+x_2^2+&\cdots& +x_{2k-1}^2+x_{2l}^2)^k =
\sum_I  
\frac{k!}{i_1! \cdots i_{2l}!}
x_1^{2i_1}   x_2^{2i_2} \cdots x_{2l-1}^{2i_{2l-1}} x^{2i_{2l}}_{2l}\\
&=& r^{2k}\sum_I   
\frac{k!}{i_1! \cdots i_{2l}!}
 (\cos^{2i_1}\theta_1 ) (\sin^{2i_2}\theta_1 \cos^{2i_2}\theta_2) \\
&&\cdots (   \sin^{2i_{2l}}\theta_1 \sin^{2i_{2l}}\theta_2    \cdots \sin^{2i_{2l}}\theta_{2l-1}\cos^{2i_{2l}}\theta_{2l}  )\\
&=& r^{2k}\sum_I 
\frac{k!}{i_1! \cdots i_{2l}!}
(\cos^{2i_1}\theta_1     \sin^{2(i_2+\cdots+i_{2l})}\theta_1           )\\
&&(\cos^{2i_2}\theta_2     \sin^{2(i_3+\cdots+i_{2l})}\theta_2           )
 \cdots
(\cos^{2i_{2l-1}}\theta_{2l-1}     \sin^{2i_{2l}}\theta_{2l-1}           )\\
&&
(\cos^{2i_{2l}}\theta_{2l}              )
\end{eqnarray*}
where the sum is as indicated in the lemma. To abbreviate notation, we define 
$a_I$  as $\frac{k!}{i_1! \cdots i_{2l}!}$,
where \( I = (i_1, \ldots, i_{2l}) \) is the multi-index corresponding to the powers of the coordinates in the integrand. 
The Jacobian determinant of the change of coordinate system, when converting from Cartesian coordinates 
to spherical coordinates, only involves the first \( 2n-1 \) variables
and is given by
\begin{eqnarray}
\label{e:det}
r^{2n-1}
\sin^{2n-2}\theta_1 \sin^{2n-3}\theta_2    \cdots 
\sin^{3}\theta_{2n-4}\sin^{2}\theta_{2n-3} \sin\theta_{2n-2}.
\end{eqnarray}
Thus, after multiplying \( (x_1^2 + \cdots + x_{2l}^2)^k \) by the expression above, we obtain
\begin{eqnarray*}
r^{2(n+k)-1} \cdot
\sin^{2n-l-2}\theta_{2l+1} \sin^{2n-2l-3}\theta_{2l+2}  \cdots 
\sin^{2}\theta_{2n-3} \sin\theta_{2n-2} \cdot \\
\sum_I 
a_I
(\cos^{2i_1}\theta_1     \sin^{2(i_2+\cdots+i_{2l}) +2n-2    }\theta_1           )
(\cos^{2i_2}\theta_2     \sin^{2(i_3+\cdots+i_{2l})+2n-3}\theta_2           )\\
\cdots
(\cos^{2i_{2l-1}}\theta_{2l-1}     \sin^{2i_{2k}   +2n-2l}\theta_{2l-1}           )
(\cos^{2i_{2l}}\theta_{2l}    \sin^{2n-2l-1}\theta_{2l}          )
\end{eqnarray*}

Next, we proceed to compute the integral over \( (0, r_0] \times [0, \pi)^{2n-2} \times [0, 2\pi) \), 
and express the final result in terms of the Beta function.
\begin{eqnarray*}
&& \frac{r_0^{2(n+k)}  }{2(n+k)}\cdot 2\pi\cdot B\left(\frac{ 1}{2}, \frac{ 2n-2l -1 }{2}\right)
B\left(\frac{ 1 }{2}, \frac{ 2n-2l-2  }{2}\right)\cdots\\
&&
B\left(\frac{ 1}{2}, \frac{2+1 }{2}\right)
B\left(\frac{1 }{2}, \frac{ 1+1 }{2}\right)
 \sum_I  a_I
B\left(\frac{ 2i_1+1 }{2}, \frac{ 2(i_2+\cdots +i_{2l}) +2n-1  }{2}\right)\cdot \\
&&B\left(\frac{ 2i_2+1 }{2}, \frac{ 2(i_3+\cdots +i_{2l}) +2n-2  }{2}\right)\cdots\\
&&
B\left(\frac{ 2i_{2l-1}+1 }{2}, \frac{ 2i_{2l} +2n-2l+1  }{2}\right)
B\left(\frac{ 2i_{2l}+1 }{2}, \frac{ 2n-2l  }{2}\right).
\end{eqnarray*}
Note that there are \( 2l \) terms in each summand. Using the Gamma function, 
the above expression takes the following form:
\begin{eqnarray*}
&&\frac{\pi \ r_0^{2(n+k)}  }{n+k}
\frac{
\Gamma\left(
\frac{1 }{2} 
\right)  
\Gamma\left( 
\frac{ 2n-2l-1}{2}
\right)
}
{\Gamma\left( \frac{ 2n -2l }{2}\right) }
\frac{
\Gamma\left(
\frac{1 }{2} 
\right)  
\Gamma\left( 
\frac{ 2n-2l-2}{2}
\right)
}
{\Gamma\left( \frac{ 2n -2l-1 }{2}\right) }\cdots \\
&&
\frac{
\Gamma\left(
\frac{1 }{2} 
\right)  
\Gamma\left( 
\frac{ 2+1}{2}
\right)
}
{\Gamma\left( \frac{ 4 }{2}\right) }
\frac{
\Gamma\left(
\frac{1 }{2} 
\right)  
\Gamma\left( 
\frac{ 1+1}{2}
\right)
}
{\Gamma\left( \frac{ 3 }{2}\right) }
\sum_I a_I 
\frac{
\Gamma\left(
\frac{ 2i_1+1 }{2} 
\right)  
\Gamma\left( 
\frac{ 2(i_2+\cdots +i_{2l}) +2n-1}{2}
\right)
}
{\Gamma\left( \frac{ 2(i_1+\cdots +i_{2l}) +2n  }{2}\right) }\\
&&
\frac{
\Gamma\left(
\frac{ 2i_2+1 }{2} 
\right)  
\Gamma\left( 
\frac{ 2(i_3+\cdots +i_{2l}) +2n-2}{2}
\right)
}
{\Gamma\left( \frac{ 2(i_{2l-1}+1) +2n -1 }{2}\right) }\cdots\\
&&
\frac{
\Gamma\left(
\frac{ 2i_{2l-1}+1 }{2} 
\right)  
\Gamma\left( 
\frac{ 2i_{2l} +2n-2l+1}{2}
\right)
}
{\Gamma\left( \frac{ 2(i_{2l-1} +i_{2l}) +2n -2l+2 }{2}\right) }
\frac{
\Gamma\left(
\frac{ 2i_{2l}+1 }{2} 
\right)  
\Gamma\left( 
\frac{ 2n-2l}{2}
\right)
}
{\Gamma\left( \frac{ 2i_{2l} +2n -2l+1 }{2}\right) }.
\end{eqnarray*}
After canceling the matching terms, the expression simplifies to:
\begin{eqnarray*}
&&\frac{\pi \ r_0^{2(n+k)}  }{n+k}   
\Gamma\left(
\frac{ 1 }{2} 
\right)  ^{2n-2l-2}
\Gamma(1)
\sum_I a_I  
\frac{
\Gamma\left(
\frac{ 2i_1+1 }{2} 
\right)    
\cdots
\Gamma\left(
\frac{ 2i_{2l}+1 }{2} 
\right)  
}
{\Gamma\left( \frac{ 2(i_1+\cdots +i_{2l}) +2n  }{2}\right) }
\\
&=&
\frac{\pi^{n-l}\ r_0^{2(n+k)}    }{(n+k)!}\sum_I   a_I 
\Gamma\left(
\frac{ 2i_1+1 }{2} 
\right)  \cdots \Gamma\left(
\frac{ 2i_{2l}+1 }{2} 
\right)\\
&=&
\frac{\pi^{n} \ r_0^{2(n+k)}  }{(n+k)!}\sum_I 
\frac{k!}{i_1!\cdots i_{2l}!}   
\frac{ 1\cdot 3\cdots (2i_1-1) }{2^{i_1}} 
  \cdots 
\frac{ 1\cdot 3\cdots (2i_{2l}-1) }{2^{i_{2l}}}. 
\end{eqnarray*}
In the case when \( i_j = 0 \), we define $(2\cdot 0-1)/2^{0}$   equal to 1.
Henceforth, for \( 1 \leq l < n \) and \( k \) a positive integer, the value of the integral (\ref{e:integralK}) is:
\begin{eqnarray*}
\int_{B^{2n}(r_0)}    ( |z_1|^2+ &\cdots&  +|z_l|^2)^k  \ \ \frac{\omega_0^n}{n!} =\\
& &\frac{\pi^{n} \ r_0^{2(n+k)}   }{ 2^k \ (n+k)! }\sum_I    
\frac{k!}{i_1!\cdots i_{2l}!}  
(2i_1-1) !!  
  \cdots 
 (2i_{2l}-1)!!. 
\end{eqnarray*}

\medskip
The case when $l=n$ follows the same approach as the computation above. For
\begin{eqnarray*}
(x_1^2+&\cdots& +x_{2n}^2)^k\\
&=& r^{2k}\sum_I  a_I
(\cos^{2i_1}\theta_1     \sin^{2(i_2+\cdots+i_{2n})}\theta_1           )
(\cos^{2i_2}\theta_2     \sin^{2(i_3+\cdots+i_{2n})}\theta_2           )\cdots\\
&& 
(\cos^{2i_{2n-2}}\theta_{2n-2}     \sin^{2(i_{2n-1}+i_{2n})}\theta_{2n-2})
(\cos^{2i_{2n-1}}\theta_{2n-1}     \sin^{2i_{2n}}\theta_{2n-1}).
\end{eqnarray*}
Thus, after multiplying by the Jacobian determinant (\ref{e:det}), we obtain:
\begin{eqnarray*}
r^{2(n+k)-1}\sum_I  a_I
(\cos^{2i_1}\theta_1     \sin^{2(i_2+\cdots+i_{2n}) +2n-2}\theta_1           )
(\cos^{2i_2}\theta_2     \sin^{2(i_3+\cdots+i_{2n})+2n-3}\theta_2           )\cdots\\
(\cos^{2i_{2n-2}}\theta_{2n-2}     \sin^{2(i_{2n-1}+i_{2n})+1}\theta_{2n-2})
(\cos^{2i_{2n-1}}\theta_{2n-1}     \sin^{2i_{2n}}\theta_{2n-1}).
\end{eqnarray*}

As before, we compute the integral of the above expression over $(0,r_0]\times [0,\pi)^{2n-2}\times [0,2\pi)$),
 and express the result in terms of the Beta function:
\begin{eqnarray*}
&& \frac{r_0^{2(n+k)}  }{2(n+k)}\sum_I  a_I
B\left(\frac{ 2i_1+1 }{2}, \frac{ 2(i_2+\cdots +i_{2n}) +2n-1  }{2}\right)\\
&&B\left(\frac{ 2i_2+1 }{2}, \frac{ 2(i_3+\cdots +i_{2n}) +2n-2  }{2}\right)\cdots\\
&&
B\left(\frac{ 2i_{2n-2}+1 }{2}, \frac{ 2(i_{2n-1} + i_{2n})+2  }{2}\right)
2B\left(\frac{ 2i_{2n-1}+1 }{2}, \frac{2i_{2n}+1  }{2}\right)
\end{eqnarray*}
This time, there are $2n-1$
terms in each summand. Using the Gamma function as before, we obtain:
\begin{eqnarray*}
&&\frac{r_0^{2(n+k)} }{n+k}\sum_I  a_I
\frac{
\Gamma\left(
\frac{ 2i_1+1 }{2} 
\right)  
\Gamma\left( 
\frac{ 2(i_2+\cdots +i_{2n}) +2n-1}{2}
\right)
}
{\Gamma\left( \frac{ 2(i_1+\cdots +i_{2n}) +2n  }{2}\right)}
\frac{
\Gamma\left(
\frac{ 2i_2+1 }{2} 
\right)  
\Gamma\left( 
\frac{ 2(i_3+\cdots +i_{2n}) +2n-2}{2}
\right)
}
{\Gamma\left( \frac{ 2(i_{2}+\cdots+ i_{2n}) +2n -1 }{2}\right) }\\
&&\cdots 
\frac{
\Gamma\left(
\frac{ 2i_{2n-2}+1 }{2} 
\right)  
\Gamma\left( 
\frac{ 2(i_{2n-1}+i_{2n}) +2}{2}
\right)
}
{\Gamma\left( \frac{ 2(i_{2n-2}+i_{2n-1} + i_{2n}) +3}{2}\right) }
\frac{
\Gamma\left(
\frac{ 2i_{2n-1}+1 }{2} 
\right)  
\Gamma\left( 
\frac{ 2i_{2n} +1}{2}
\right)
}
{\Gamma\left( \frac{ 2(i_{2n-1}+i_{2n}) +2}{2}\right) }.
\end{eqnarray*}

Finally, this expression simplifies to yield a result similar to the case 
$k<n$,
\begin{eqnarray*}
\frac{r_0^{2(n+k)}}{n+k}\sum_I  &a_I&
\frac{
\Gamma\left(
\frac{ 2i_1+1 }{2} 
\right)  
\cdots
\Gamma\left(
\frac{ 2i_{2n}+1 }{2} 
\right)  
}
{\Gamma\left( \frac{ 2(i_1+\cdots +i_{2n}) +2n  }{2}\right)}\\
&=&\frac{r_0^{2(n+k)}}{(n+k)!}\sum_I a_I
\Gamma\left(
\frac{ 2i_1+1 }{2} 
\right)  
\cdots 
\Gamma\left(
\frac{ 2i_{2n}+1 }{2} 
\right)\\
&=&
\frac{\pi^n\ r_0^{2(n+k)} }{2^{n}\ (n+k)!}
\sum_I    
\frac{k!}{i_1!\cdots i_{2n}!}
 (2i_1-1) !!  
  \cdots 
 (2i_{2n}-1)!!. 
\end{eqnarray*}

\bibliographystyle{acm}

\bibliography{/Users/andrespedroza/Dropbox/Documentostex/Ref.bib} 

\begin{thebibliography}{1}

\bibitem{gromov-psudo}
{\sc Gromov, M.}
\newblock Pseudoholomorphic curves in symplectic manifolds.
\newblock {\em Invent. Math. 82}, 2 (1985), 307--347.

\bibitem{KedraMcDuff}
{\sc K{\c{e}}dra, J., and McDuff, D.}
\newblock Homotopy properties of {Hamiltonian} group actions.
\newblock {\em Geom. Topol. 9\/} (2005), 121--162.

\bibitem{mcduffpol-packing}
{\sc McDuff, D., and Polterovich, L.}
\newblock Symplectic packings and algebraic geometry.
\newblock {\em Invent. Math. 115}, 3 (1994), 405--434.
\newblock With an appendix by Yael Karshon.

\bibitem{pea-rankham}
{\sc Pedroza, A.}
\newblock On the rank of $\pi_1(\text{Ham})$.
\newblock {\em Algebr. Geom. Topol. 22}, 3 (2022), 1325--1336.

\bibitem{reznikov}
{\sc Reznikov, A.~G.}
\newblock Characteristic classes in symplectic topology (with an appendix by
  {Ludmil} {Katzarkov}).
\newblock {\em Sel. Math., New Ser. 3}, 4 (1997), 601--642.

\bibitem{seidel-pi1of}
{\sc Seidel, P.}
\newblock {$\pi_1$} of symplectic automorphism groups and invertibles in
  quantum homology rings.
\newblock {\em Geom. Funct. Anal. 7}, 6 (1997), 1046--1095.

\bibitem{weinstein-coho}
{\sc Weinstein, A.}
\newblock Cohomology of symplectomorphism groups and critical values of
  {H}amiltonians.
\newblock {\em Math. Z. 201}, 1 (1989), 75--82.

\end{thebibliography}
\end{document}